\newtheorem{theo}{Theorem}[section]
\newtheorem{lemm}[theo]{Lemma}
\newtheorem{rem}[theo]{Remark}
\numberwithin{equation}{section}
\newcommand{\al}{\alpha}
\newcommand{\be}{\beta}
\newcommand{\Ga}{\Gamma}
\newcommand{\la}{\lambda}
\newcommand{\om}{\omega}
\newcommand{\Om}{\Omega}
\newcommand{\si}{\sigma}
\newcommand{\ep}{\epsilon }
\newcommand{\te}{\theta}
\newcommand{\De}{\Delta}
\newcommand{\de}{\delta}
\newcommand{\pa}{\partial}
\newcommand{\R}{{\mathbb R}^n}
\newcommand{\ri}{\rightarrow}
\newcommand{\Rn}{{\mathbb R}^{n-1}}
\newcommand{\na}{\nabla}
\begin{document}
\baselineskip=18pt

\title[]{Global well-posedness of the half space problem of the Navier-Stokes equations in critical function spaces of limiting  case}

\
\author{Tongkeun Chang}
\address{Department of Mathematics, Yonsei University \\
Seoul, 136-701, South Korea}
\email{chang7357@yonsei.ac.kr}

\author{Bum Ja Jin}
\address{Department of Mathematics, Mokpo National University, Muan-gun 534-729,  South Korea }
\email{bumjajin@mokpo.ac.kr}

\thanks{Tongkeun Chang was  supported by NRF-2017R1D1A1B03033427 and Bum Ja Jin was supported by NRF-2016R1D1A1B03934133.
}

\begin{abstract}
In this  paper, we study the  initial-boundary value problem of the Navier-Stokes equations in  half-space. Let a solenoidal  initial velocity be given in  the function space $ \dot{B}_{p\infty,0}^{ -1 + n/p}({\mathbb R}^n_+)$   for  $ \frac{n}3< p < n$.  We prove the global in time existence of weak solution $u\in L^\infty(0,\infty; \dot B^{-1 +n/p}_{p\infty}({\mathbb R}^n_+))$, when the  given initial velocity   has small norm in function space $ \dot{B}_{p\infty,0}^{-1 + n/p} ({\mathbb R}^n_+)$, where $ \frac{n}3< p< n$.
    \\

\noindent
 2000  {\em Mathematics Subject Classification:}  primary 35K61, secondary 76D07. \\

\noindent {\it Keywords and phrases: Stokes equations, Navier-Stokes equations, Homogeneous initial boundary value problem,  Half-space. }

\end{abstract}

\maketitle

\section{\bf Introduction}
\setcounter{equation}{0}

In this  paper, we study the initial-boundary value problem of Navier--Stokes equations
\begin{align}\label{maineq2}
\begin{array}{l}\vspace{2mm}
u_t - \De u + \na p =-{\rm div}(u\otimes u), \qquad {\rm div} \, u =0 \mbox{ in }
 \R_+\times (0, \infty),\\
 u(x,0) = u_0(x) \quad x \in \R_+, \qquad  u(x',0, t) = 0 \quad x' \in \Rn, \,\, t \in (0, \infty),
\end{array}
\end{align}
where
 $u=(u_1,\cdots, u_n)$ and $p$ are the unknown velocity and pressure, respectively, and
   $     u_0=(u_{01},\cdots, u_{0n})$ is the given initial data.

Because the  Navier--Stokes equations are invariant under the scaling
\begin{align*}
u_\la(x,t) = \la u(\la x, \la^2 t),\quad   p_\la (x,t) = \la^2 p(\la x, \la^2t), \quad
u_{0 \la} (x)= \la u_0 (\la x),
\end{align*}
it is important to
study \eqref{maineq2} in the so-called critical spaces, i.e., the function spaces
with norms invariant under the scaling $u(x,t) \ri \la u(\la x,\la^2 t)$. The homogeneous Besov space $\dot B^{-1 +\frac{n}p}_{pq} (\R_+)$ is one of critical spaces.

In this paper, we prove the existence of global time mild solution $u\in L^\infty(0,\infty;\dot B^{-1 +\frac{n}p}_{p\infty} (\R_+))$ of \eqref{maineq2} for the initial data  $ u_0 \in \dot B^{-1 +\frac{n}p}_{p \infty,0} (\R_+)$, $\frac{n}3 < p < n$. See Section \ref{notation} for definitions of function spaces.

There are a number of papers dealing with global well-posedness for \eqref{maineq2} in homogeneous Besov space $\dot B^{-1 +\frac{n}p}_{pq} (\R_+), \, 1 < p, q < \infty$ (see \cite{chang-jin2,CJ2,CJ3,CJ4,DZ,farwig-giga-hsu,FSV,KSe2}  and the references therein).

The limiting case $ 1 < p < \infty, \, q=\infty$ has been studied by M. Cannone, F.  Planchon, and  M. Schonbek \cite{CPS} for $u_0 \in L^3 ({\mathbb R}^3_+)$,
by  H. Amann  \cite{amann} for $u_0 \in b^{-1 +\frac{n}p}_{p,\infty} (\Om)$, $ p > \frac{n}3,p \neq  n$, where $\Om$ is a standard domain like  ${\mathbb R}^3, {\mathbb R}^3_+$, exterior or bounded domain  in ${\mathbb R}^3$ and   by  M. Ri, P. Zhang and Z. Zhang \cite{RZZ} for  $u_0 \in b^{0}_{n \infty} (\Om)$,
where $ \Om$ is  $\R$, $\R_+$ or bounded domain with smooth boundary, and  $b^s_{p\infty}(\Omega)$ denotes the completion of the generalized Sobolev space $H^s_p(\Omega)$ in $B^s_{p\infty}(\Omega)$.
In particular, in \cite{CPS}, the solution $ u \in L^\infty_{\frac12 -\frac3{2p}} (0, \infty, L^3({\mathbb R}^3_+))$ exists globally in time when $\|u_0\|_{\dot B^{-1 +\frac3{p}}_{p \infty}({\mathbb R}^3_+)}, p > 3$ ($L^3(\R_+) \subset \dot B^{-1 +\frac3p}_{p \infty}({\mathbb R}^3_+)$) is small enough. Recently, H.  Kozono and S. Shimizu \cite{KSe} showed the existence of solution $u \in L^\infty (0, \infty; \dot B^{-1 +\frac{n}p}_{p \infty} (\R)), \,\, p > n$  of \eqref{maineq2} with sufficiently small norm of $u_0$ in $\dot B^{-1 +\frac{n}p}_{p\infty, 0} (\R)$. See also   \cite{fernandes,FGH,giga1,kozono1,Ko-Ya,Ma,sol1} and the references therein for the initial   value problem of Navier-Stokes equations in the half space.

Our study of this paper is motivated by the results in \cite{amann,CPS,KSe,RZZ}.
The following text states our main results.
\begin{theo}
\label{thm-navier}
Let   $\frac{n}3<p<n$. Assume that  $u_0\in \dot {B}_{p\infty,0}^{-1 +\frac{n}p }(\R_+)$ with $ \mbox{\rm div}\, u_0=0$. Then,    there is $\epsilon_*>0$  so that   if $\|u_0\|_{ \dot {B}_{p_0\infty,0}^{-1 + \frac{n}{p_0}} (\R_+) }<\ep_*$ for some   $n<p_0<\infty$,  then
 \eqref{maineq2} has a solution
 $u\in L^\infty(0,\infty;\dot B^{-1 +\frac{n}p}_{p\infty} (\R_+))$ $\cap L^{\infty}_{\frac12 -\frac{n}{2p_0}}( 0, \infty;L^{p_0} (\R_+) )$.
\end{theo}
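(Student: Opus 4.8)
The plan is to solve the mild (integral) formulation of \eqref{maineq2} by a bilinear fixed–point argument in a Kato-type space, and then to promote the resulting solution to the critical Besov class a posteriori. Writing $e^{-tA}$ for the half-space Stokes semigroup and $\mathbb{P}$ for the Helmholtz projection, a mild solution satisfies
\[
u(t)=e^{-tA}u_0-\int_0^t e^{-(t-s)A}\,\mathbb{P}\,{\rm div}\,(u\otimes u)(s)\,ds =: u_{\rm lin}+B(u,u),
\]
with $B(u,v):=-\int_0^t e^{-(t-s)A}\mathbb{P}\,{\rm div}\,(u\otimes v)(s)\,ds$. I would run the contraction in
\[
X:=\Big\{\,u:\ \|u\|_X:=\sup_{0<t<\infty} t^{\frac12-\frac{n}{2p_0}}\|u(t)\|_{L^{p_0}(\R_+)}<\infty\,\Big\}=L^\infty_{\frac12-\frac{n}{2p_0}}(0,\infty;L^{p_0}(\R_+)),
\]
which matches the smallness hypothesis carried in the index $p_0$, and treat the $L^\infty(0,\infty;\dot B^{-1+n/p}_{p\infty})$ membership separately. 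By the standard abstract lemma for $u=u_{\rm lin}+B(u,u)$, once the linear bound $\|u_{\rm lin}\|_X\le C\ep_*$ and the bilinear bound $\|B(u,v)\|_X\le C_B\|u\|_X\|v\|_X$ are available, a solution with $\|u\|_X\le 2C\ep_*$ exists (and is unique in a ball) as soon as $4C_BC\ep_*<1$, i.e.\ for $\ep_*$ small.

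For the linear term I would invoke the half-space analogue of the heat-semigroup characterization of Besov norms, namely $\sup_{t>0}t^{\frac12-\frac{n}{2p_0}}\|e^{-tA}u_0\|_{L^{p_0}(\R_+)}\simeq\|u_0\|_{\dot B^{-1+n/p_0}_{p_0\infty,0}(\R_+)}$. Here the negative index $-1+n/p_0<0$ (recall $p_0>n$) is exactly what makes this the genuine semigroup description, and the little-space subscript $0$ furnishes the decay as $t\to0$ and $t\to\infty$ needed for $u_{\rm lin}\in X$. This gives $\|u_{\rm lin}\|_X\le C\|u_0\|_{\dot B^{-1+n/p_0}_{p_0\infty,0}}<C\ep_*$ directly.

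The heart of the matter is the bilinear bound in $X$. By H\"older and the kernel estimate for $e^{-(t-s)A}\mathbb{P}\,{\rm div}$ one has
\[
\|(u\otimes v)(s)\|_{L^{p_0/2}(\R_+)}\le s^{-(1-\frac{n}{p_0})}\|u\|_X\|v\|_X,\qquad \|e^{-(t-s)A}\mathbb{P}\,{\rm div}\,f\|_{L^{p_0}(\R_+)}\lesssim (t-s)^{-\frac12-\frac{n}{2p_0}}\|f\|_{L^{p_0/2}(\R_+)},
\]
so that $\|B(u,v)(t)\|_{L^{p_0}}$ is controlled by the Beta-type integral
\[
\int_0^t (t-s)^{-\frac12-\frac{n}{2p_0}} s^{-(1-\frac{n}{p_0})}\,ds \simeq t^{-(\frac12-\frac{n}{2p_0})}.
\]
The power of $t$ reproduces the weight of $X$ exactly, the singularity at $s=0$ is always integrable since $1-\tfrac{n}{p_0}<1$, and the singularity at $s=t$ is integrable \emph{precisely when} $p_0>n$; this is the structural reason the hypothesis is imposed for some $n<p_0<\infty$. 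Hence $\|B(u,v)\|_X\le C_B\|u\|_X\|v\|_X$, closing the fixed point and producing $u\in X$.

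Finally I would upgrade $u=u_{\rm lin}+B(u,u)$ to $L^\infty(0,\infty;\dot B^{-1+n/p}_{p\infty}(\R_+))$. The linear part is handled by the boundedness of $e^{-tA}$ on $\dot B^{-1+n/p}_{p\infty}$ together with the standing hypothesis $u_0\in\dot B^{-1+n/p}_{p\infty,0}$, uniformly in $t$. For the Duhamel part I would estimate $\|B(u,u)(t)\|_{\dot B^{-1+n/p}_{p\infty}}$ through the (half-space) heat description of this space: since $0<-1+\tfrac{n}{p}<2$, a single time derivative $\pa_\tau e^{\tau\De}$ suffices, the positivity $-1+\tfrac{n}{p}>0$ being equivalent to $p<n$ and the bound $-1+\tfrac{n}{p}<2$ to $p>\tfrac{n}{3}$, so the admissible range is exactly $\tfrac{n}{3}<p<n$; feeding the $X$-bound on $u$ and the kernel estimates into the resulting time integral produces a bound by $\|u\|_X^2$ that converges in this same range. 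The main obstacle throughout is that in the half-space $e^{-(t-s)A}\mathbb{P}\,{\rm div}$ is \emph{not} a pure heat convolution: the Helmholtz projection and the no-slip boundary condition generate pressure- and boundary-induced correction terms, and the genuinely hard point is to show that these correction terms obey the \emph{same} sharp decay $(t-s)^{-\frac12-\frac{n}{2p_0}}$ from $L^{p_0/2}$ to $L^{p_0}$ (and the corresponding mapping into $\dot B^{-1+n/p}_{p\infty}$). These kernel bounds, which rest on the explicit half-space Stokes solution formula from the authors' earlier work, are where all the technical weight of the argument lies.
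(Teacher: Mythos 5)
Your overall architecture --- Picard iteration for the mild formulation, contraction in the Kato space $X=L^\infty_{\frac12-\frac{n}{2p_0}}(0,\infty;L^{p_0}(\R_+))$, with $p_0>n$ making the Beta integral converge and $\frac n3<p<n$ entering as $0<-1+\frac np<2$ --- coincides with the paper's: the approximate solutions $u^{m+1}$ solving Stokes with forcing $-\mbox{div}(u^m\otimes u^m)$ are exactly your Picard iterates, and Theorem \ref{thm-stokes}(1) with $p_1=p_0/2$ is your bilinear kernel estimate. The genuine gap is in your last step, where you recover $u\in L^\infty(0,\infty;\dot B^{-1+n/p}_{p\infty}(\R_+))$ \emph{a posteriori} from the $X$-bound alone, claiming $\|B(u,u)(t)\|_{\dot B^{-1+n/p}_{p\infty}}\lesssim\|u\|_X^2$. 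From the $X$-norm, the quadratic term $u\otimes u$ is controlled only in $L^{p_0/2}(\R_+)$ (with a time weight), so your estimate requires the Stokes kernel to map $L^{p_0/2}$ into a space built on $L^p$. But $p<n<p_0$, so $p_0/2>p$ can occur, and for $\frac n3<p\le\frac n2$ it \emph{always} occurs (since $p_0>n\ge 2p$); on an unbounded domain there is no smoothing, and no embedding, from $L^{q}$ into $L^p$-based spaces when $q>p$. Hence no inequality of the form $\|B(u,u)\|_{L^\infty \dot B^{-1+n/p}_{p\infty}}\le C\|u\|_X^2$ can hold: both sides scale identically, so the obstruction is invisible to dimensional analysis --- it is a global-integrability (spatial decay) obstruction. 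The same failure is already visible at the linear level: $\|e^{-tA}u_0\|_{\dot B^{-1+n/p}_{p\infty}}$ is not controlled by $\|u_0\|_{\dot B^{-1+n/p_0}_{p_0\infty,0}}$, which is precisely why the theorem retains the separate hypothesis $u_0\in\dot B^{-1+n/p}_{p\infty,0}$.

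The repair --- and this is where the paper's proof genuinely differs from your plan --- is that the critical Besov bound must be propagated \emph{coupled} with the Kato bound, not extracted afterwards. The paper estimates the nonlinearity by the H\"older-type product inequality in Besov spaces (Lemma \ref{0510prop}),
\begin{align*}
\|u\otimes u\|_{\dot B^{-1+n/p}_{p_1\infty}}\le c\,\|u\|_{\dot B^{-1+n/p}_{p\infty}}\,\|u\|_{L^{p_0}},\qquad \tfrac1{p_1}=\tfrac1p+\tfrac1{p_0},
\end{align*}
whose right-hand side involves the Besov norm of $u$ itself, and then applies Theorem \ref{thm-stokes}(2), which maps $\dot B^{-1+n/p}_{p_1\infty}$-valued forcing to $\dot B^{-1+n/p}_{p\infty}$-valued solutions; here $p_1<p$, so the integrability index \emph{increases}, which is the admissible direction. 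This yields, along the iteration, $\|u^{m+1}\|_{L^\infty\dot B^{-1+n/p}_{p\infty}}\le c\big(N+M_0\,\|u^{m}\|_{L^\infty\dot B^{-1+n/p}_{p\infty}}\big)$, where only the Kato norm $M_0$ must be small while the Besov norm $M$ may be large; induction and the analogous two-norm difference estimates then give uniform bounds and convergence in both spaces simultaneously. If you wish to keep your ``fixed point first, regularity after'' format, you must still run this two-norm bootstrap along iterates known to have finite Besov norm, rather than attempt a closed bound by $\|u\|_X^2$, which is false.
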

 \begin{rem}
 In Theorem \ref{thm-navier}
  observe that   $\dot {B}_{p\infty,0}^{-1 +\frac{n}p }(\R_+) \subset \dot {B}_{p_0\infty,0}^{-1 +\frac{n}{p_0}} (\R_+)$ holds (see Theorem 6.5.1 in \cite{BL}).
\end{rem}

For the proof of Theorem \ref{thm-navier}, it is necessary to study the following initial value problem of the Stokes equations in $\R_+\times (0,\infty)$:
\begin{align}\label{maineq-stokes}
\begin{array}{l}\vspace{2mm}
u_t - \De u + \na p =f, \qquad {\rm div} \, u =0 \mbox{ in }
 \R_+\times (0,\infty),\\
\hspace{30mm}u|_{t=0}= u_0, \qquad  u|_{x_n =0} = 0,
\end{array}
\end{align}
where $f=\mbox{div}{\mathcal F}$.

In \cite{GGS}, M. Giga, Y. Giga and H. Sohr showed that if $f \in L^q(0, T; \hat D(A^{-\al}_p))$  and $u_0=0$ then the solution $u$ of Stokes equations  \eqref{maineq-stokes} satisfies that for $0 < \al <1$,
\begin{align*}
\int_0^T \Big( \| (\frac{d}{dt})^{1-\al}  u(t) \|^q_{L^p (\Om)} + \| A^{1-\al}_p u(t)\|^q_{L^p (\Om)}  \Big)dt \leq c(p,q, \Om,\al) \int_0^T \|A_p^{-\al}f (t) \|^q_{L^p (\Om)} dt,
\end{align*}
where   $A_p $ is Stokes operator in $\Om$ for standard domain $\Om$ such as bounded domain, exterior domain or half space, and   $ \hat D(T)$ is the completion of $D(T)$ in the homogeneous norm $\|T\cdot\|$
. In particular,  if $f ={\rm div}\, {\mathcal F}$ with ${\mathcal F} \in L^q(0, T; L^p_\si (\Om))$ then
\begin{align*}
\int_0^T \Big(\| (\frac{d}{dt})^{\frac12}  u(t) \|^q_{L^p (\Om)} + \| \na u(t)\|^q_{L^p (\Om)} \Big) dt \leq c(p,q, \Om) \int_0^T \|{\mathcal F}(t) \|^q_{L^p (\Om)} dt.
\end{align*}

H. Koch and V. A. Solonnikov~\cite{KS} showed  the unique local in time existence of solution $u \in L^q(0,T; L^q({\mathbb R}^3_+))$  of \eqref{maineq-stokes}
when $f=\mbox{div}{\mathcal F}$, ${\mathcal F}\in L^q(0,T; L^q({\mathbb R}^3_+))$ and  $u_0=0$. See also  \cite{giga1,KS1,kozono1,sol1,Sol-2} and the references therein.

The following theorem states our result on the unique solvability of the Stokes equations \eqref{maineq-stokes}.
\begin{theo}\label{thm-stokes}
Let $1 < p_1 \leq p < \infty$  and $\al > 0$. Let  $u_0\in \dot {B}_{p\infty,0}^{-\al}(\R_+)$ with $ \mbox{\rm div}\, u_0=0$ and  $f=\mbox{div}\mathcal F$.
\begin{itemize}
\item[(1)]
Let $ -1 +\al < \frac{n}{p_1} -\frac{n}p < 1$ and    ${\mathcal F} \in L^{\infty}_{\frac12 \al +\frac12 -\frac{n}2(\frac1{p_1} -\frac1p)  } (0, \infty,  L^{p_1 }(\R_+)) $. There is a solution  $u$ of \eqref{maineq-stokes}   with
\begin{align*}
\|u\|_{L^\infty_{\frac12 \al } (0, \infty; L^p (\R_+))}  \leq c \big( \| u_0\|_{\dot B^{-\al}_{p\infty, 0} (\R_+)} +  \| {\mathcal F}\|_{L^{\infty}_{\frac12 \al +\frac12 -\frac{n}2(\frac1{p_1} -\frac1p)  } (0,\infty;  L^{p_1}(\R_+))} \big).
\end{align*}
\item[(2)]
 Let $ 0<  \al  < 2$   such that $ 1 + \frac{n-1}p <  \frac{n}{p_1} < 1 + \frac{n}p $ and     ${\mathcal F} \in L^{\infty}_{\frac12 -\frac{n}{2p_1} +\frac{n}{2p}} (0,\infty;  \dot B^{\al }_{p_1\infty}(\R_+)) $. There is a solution  $u$ of \eqref{maineq-stokes}   with
\begin{align*}
\| u\|_{L^\infty (0, \infty;  \dot B^\al_{p\infty}(\R_+))} \leq c \big( \| u_0\|_{ \dot {B}_{p\infty,0}^{\al}(\R_+)} + \| {\mathcal F}\|_{L^{\infty}_{\frac12 -\frac{n}{2p_1} +\frac{n}{2p}} (0,\infty;  \dot B^{\al }_{p_1\infty}(\R_+))} \big).
\end{align*}

\end{itemize}
\end{theo}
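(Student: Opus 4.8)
The plan is to use linearity and write the solution of \eqref{maineq-stokes} as the sum of the solution of the homogeneous problem carrying the initial datum $u_0$ and the solution of the zero-initial-data problem carrying the forcing $f=\mbox{div}\,{\mathcal F}$. Writing $e^{-tA}$ for the Stokes semigroup on $\R_+$ and $\mathbb{P}$ for the Helmholtz projection, I would represent the solution by the Duhamel formula
\begin{align*}
u(t) = e^{-tA} u_0 + \int_0^t e^{-(t-s)A}\,\mathbb{P}\,\mbox{div}\,{\mathcal F}(s)\,ds,
\end{align*}
so that the two assertions follow once each summand is controlled in the relevant scale, namely $L^\infty_{\frac12\al}(0,\infty;L^p(\R_+))$ in case (1) and $L^\infty(0,\infty;\dot B^\al_{p\infty}(\R_+))$ in case (2). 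The solenoidality of $u_0$ and the vanishing trace $u|_{x_n=0}=0$ are automatically respected because $e^{-tA}$ is the Stokes, not the heat, semigroup.

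For the initial-datum term I would invoke the semigroup characterization of the homogeneous Besov spaces. In case (1), since $\al>0$, the defining property $\sup_{t>0} t^{\frac12\al}\|e^{-tA}u_0\|_{L^p(\R_+)} \le c\,\|u_0\|_{\dot B^{-\al}_{p\infty}(\R_+)}$ yields the $L^\infty_{\frac12\al}$ bound at once. In case (2), positivity of the smoothness index lets me instead use the uniform-in-$t$ boundedness $\|e^{-tA}u_0\|_{\dot B^\al_{p\infty}(\R_+)} \le c\,\|u_0\|_{\dot B^\al_{p\infty}(\R_+)}$.

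For the forcing term the heart of the matter is a single smoothing estimate for the composite operator $e^{-sA}\,\mathbb{P}\,\mbox{div}$,
\begin{align*}
\| e^{-sA}\,\mathbb{P}\,\mbox{div}\, g\|_{X(\R_+)} \le c\, s^{-\frac12 -\frac{n}2(\frac1{p_1} -\frac1p)}\|g\|_{Y(\R_+)},
\end{align*}
with $(X,Y)=(L^p, L^{p_1})$ in case (1) and $(X,Y)=(\dot B^{\al}_{p\infty}, \dot B^{\al}_{p_1\infty})$ in case (2); here the exponent $\tfrac12$ records the cost of the divergence and $\tfrac{n}2(\tfrac1{p_1}-\tfrac1p)$ the $L^{p_1}\!\to\!L^p$ gain. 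Inserting the time decay of ${\mathcal F}$ reduces the Duhamel integral to the scalar convolution
\begin{align*}
\int_0^t (t-s)^{-a}\, s^{-b}\, ds = c\, t^{\,1-a-b}, \qquad a=\tfrac12 +\tfrac{n}2(\tfrac1{p_1}-\tfrac1p),
\end{align*}
valid precisely when $a<1$ and $b<1$, where $b=\tfrac12\al+\tfrac12 -\tfrac{n}2(\tfrac1{p_1}-\tfrac1p)$ in case (1) and $b=\tfrac12-\tfrac{n}2(\tfrac1{p_1}-\tfrac1p)$ in case (2). A direct computation gives $1-a-b=-\tfrac12\al$ in case (1), so the weight $t^{\frac12\al}$ cancels the decay and produces the stated $L^\infty_{\frac12\al}$ bound, while $1-a-b=0$ in case (2) gives the uniform $L^\infty(0,\infty;\dot B^\al_{p\infty})$ bound. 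Moreover $a<1$ is exactly $\frac{n}{p_1}-\frac{n}p<1$ (equivalently $\frac{n}{p_1}<1+\frac{n}p$) and, in case (1), $b<1$ is exactly $-1+\al<\frac{n}{p_1}-\frac{n}p$, so the stated hypotheses are precisely the convergence conditions for the convolution.

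The main obstacle is establishing the smoothing estimate for $e^{-sA}\,\mathbb{P}\,\mbox{div}$ on the half-space. Unlike the heat semigroup, the Stokes semigroup on $\R_+$ carries boundary-correction terms enforcing $u|_{x_n=0}=0$ together with the divergence-free constraint, so its kernel is the Solonnikov/Green tensor rather than a Gaussian. I expect the hardest work to lie in bounding the nonlocal boundary part of this kernel in the Besov scale, and this is precisely where the remaining constraint $1+\frac{n-1}p<\frac{n}{p_1}$ in case (2) enters: it guarantees that the boundary integral---an $(n-1)$-dimensional trace contribution over $\Rn$---is absolutely convergent and obeys the same parabolic scaling as the interior part. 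These kernel bounds can be assembled from the representation and pointwise estimates for the half-space Stokes tensor in \cite{chang-jin2,CJ2,CJ3,CJ4,KS,Sol-2} together with the estimate of \cite{GGS}; once they are in place, both assertions follow from the semigroup characterization and the Beta-integral computation above.
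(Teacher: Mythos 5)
Your proposal is correct and follows essentially the same route as the paper: the paper performs the identical splitting into the homogeneous problem \eqref{stokes.zero} and the zero-data forced problem \eqref{maineq-stokesh=0}, estimates the first through the heat-kernel characterization of Besov spaces (Lemma \ref{lemm0315}) and the second through smoothing bounds for $\Gamma * {\mathbb P}\,{\rm div}$ and $\Gamma^* * {\mathbb P}\,{\rm div}$ (Lemma \ref{0929-1}) followed by exactly your Beta-integral computation, with Solonnikov's Green tensor playing the role of your abstract $e^{-tA}$. The smoothing estimate you defer to the literature is the one ingredient the paper actually proves (Appendix \ref{appendix0292-1}, via the explicit Helmholtz-projection formula of Section \ref{projection}, a tangential trace estimate for the boundary term, and interpolation), and your identification of its exponents, of the boundary-trace contribution, and of where each hypothesis enters agrees with the paper's argument.
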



This paper is organized as follows.
In Section \ref{notation}, we introduce  the function spaces and definition of the weak solutions of Stokes equations and Navier-Stokes equations.  In Section \ref{preliminary},    various  estimates of operators related to a  Newtonian  kernel and Gaussian kernel are given.  
In Section \ref{theoremthm-stokes}, we  complete the proof of Theorem \ref{thm-stokes}. In Section \ref{nonlinear}, we give the proof of Theorem \ref{thm-navier} while applying the estimates in  Theorem \ref{thm-stokes} to the  approximate solutions.

\section{Notations, Function spaces and  Definitions of weak solutions}
\label{notation}
We denote by  $x'$ and $x=(x',x_n)$ the points of the spaces $\Rn$ and $\R$, respectively.
The multiple derivatives are denoted by $ D^{k}_x D^{m}_t = \frac{\pa^{|k|}}{\pa x^{k}} \frac{\pa^{m} }{\pa t}$ for multi-index
$ k$ and nonnegative integers $ m$.
Throughout this paper we denote by $c$ various generic constants. 

%

For $s\in {\mathbb R}$ and $1\leq p,q\leq \infty$,
 we denote   $\dot H^s_{p}(\R)$ and $\dot{B}^s_{pq}(\R)$  the generalized homogeneous Sobolev spaces(space of Bessel potentials) and the homogeneous Besov spaces in $\R$, respectively (see \cite{BL,Tr} for the definition of function spaces).  
 Denoted by $\dot H^s_p(\R_+)$ and $ \dot B^s_{pq}(\R_+)$ are  the restrictions of $\dot H^s_p (\R)$ and $  \dot B^s_{pq} (\R)$, respectively,    with norms
 \begin{align*}
\| f \|_{\dot H^s_p(\R_+) }  = \inf \{ \| F\|_{ \dot H^s_p(\R)}\, | \, F|_{\R_+} =f, \,\,  F \in  \dot H^s_p(\R) \},\\
\| f \|_{\dot B^s_{pq}(\R_+) }  = \inf \{ \| F\|_{ \dot B^s_{pq}(\R)}\, | \, F|_{\R_+} =f, \,\, F \in  \dot B^s_{pq}(\R) \}.
 \end{align*}
For    a non-negative integer $k$, $\dot H^k_p(\R_+)
 = \{ f \, | \,   \sum_{|l| =k} \| D^l f\|_{L^p (\R_+)} < \infty \}.$
In particular, $\dot H^0_p(\R_+) = L^p (\R_+)$. 


For $s\in {\mathbb R}$, we denote  $\dot{B}^s_{pq,0}(\R_+), 1\leq p,q\leq \infty$  by
 \begin{align*}
 \dot{B}^s_{pq,0}(\R_+ ) & = \{ f \in  \dot{B}^s_{pq}(\R_+) \, | \,  \tilde f \in \dot B^s_{pq} ({\mathbb R}^n) \},
 \end{align*}
where $ \tilde f$ is the zero extension of $f$ over $\R$. Note that   $\| \tilde f \|_{\dot B^s_{pq} (\R)} \leq c \| f\|_{\dot B^s_{pq,0} (\R_+)}$.

Note that for $s \geq 0$,   $\dot B^{-s}_{pq, 0} (\R_+), \, 1 < p, q \leq \infty$ is the dual space of  $\dot B^{s}_{p'q'} (\R_+)$, that is,  $\dot B^{-s}_{pq, 0} (\R_+) = (\dot B^s_{p' q'}(\R_+))^*$, where $\frac1p + \frac1{p'} =1$ and $ \frac1q +\frac1{q'} =1$.

For the Banach space $X$, we denote  by $L_\be^\infty (0, \infty ;X)$, $\be \geq 0$  the usual Bochner space with norm
\begin{align*}
\| f\|_{L^\infty_\be (0, \infty; X)} : = \sup_{0 < t < \infty} t^\be  \| f(t) \|_X.
\end{align*}
%

For  $1\leq q\leq \infty$ and   $0< \theta<1$, we    denote by   $(X,Y)_{\theta,q}$ and $[X, Y]_\te$  the real interpolation  and complex interpolation of the Banach spaces $X$ and $Y$. In particular, for $ 0< \te < 1 $, $  \al, \al_1, \al_2 \in {\mathbb R} $ and  $1 \leq  p, q, r \leq \infty$,
\begin{align}
\label{interpolation0}
[\dot H^{\al_1}_{p}(\R_+), \dot H^{\al_2}_{p} (\R_+)]_{\te } = \dot H^\al_{p}(\R_+),\\
\label{interpolation1-3}
 [\dot B^{\al}_{p_1 q}(\R_+), \dot B^{\al_2}_{pq} (\R_+)]_{\te} = \dot B^\al_{pr}(\R_+),\\
\label{interpolation1}
(\dot H^{\al_1}_{p}(\R_+), \dot H^{\al_2}_{p} (\R_+))_{\te, r} = \dot B^\al_{pr}(\R_+),
\end{align}
when $\al = \te \al_1 + (1 -\te) \al_2$. See Theorem 6.4.5, Theorem 5.1.2 and Theorem 5.6.2 in \cite{BL}.

From now, we denote $\dot B^\be_{p\infty}: = \dot B^\be_{p\infty}(\R_+)$, $\dot H^\be_{p}: = \dot H^\be_{p}(\R_+)$,   $L^\infty_\al B^\be_{p\infty}: = L^\infty_\al (0, \infty; \dot B^\be_{p\infty}(\R_+))$ and $L^\infty_\al L^{p}: = L^\infty_\al(0, \infty; \dot L^{p}(\R_+))$.

\section{\bf Preliminary Estimates.}

\label{preliminary}
\setcounter{equation}{0}

\subsection{Newtonian potential}

The fundamental solution of the Laplace equation in $\R$ is  denoted by
\[
   N(x) = \left\{\begin{array}{ll}
 \vspace{2mm}
  \frac{1}{\om_n (2-n)|x|^{n-2}}&\mbox{if }n\geq 3,\\
 \frac{1}{2\pi}\ln |x|&\mbox{if }n=2,\end{array}\right.
 \]
$\omega_n$ is the surface area of the unit sphere in $\R$.
\begin{lemm}\label{lemma0929-22}
For $\al \geq  0$ and $1 < p < \infty$.
\begin{align*}
\| \nabla_x^2 \int_{{\mathbb R}^n_+} N(\cdot-y)    f(y) dy \|_{ \dot H^\al_{p} } \leq c \| f\|_{ \dot H^\al_{p}  }.
\end{align*}
\end{lemm}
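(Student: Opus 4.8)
The plan is to recognize the operator $f\mapsto \na^2\int_{\R_+}N(\cdot-y)f(y)\,dy$ as the half-space realization of a composition of Riesz transforms and then transplant the classical full-space Calder\'on--Zygmund bound to the half space. \emph{Step 1 (reduction to the full space).} Writing $\tilde f$ for the zero extension of $f$ to $\R$, one has $\int_{\R_+}N(\cdot-y)f(y)\,dy=N*\tilde f=(-\De)^{-1}\tilde f$, so that $\pa_i\pa_j\int_{\R_+}N(\cdot-y)f(y)\,dy=R_iR_j\tilde f$, where $R_j$ denotes the $j$-th Riesz transform. The symbol of $R_iR_j$ is $-\xi_i\xi_j/|\xi|^2$, which is smooth and homogeneous of degree $0$ away from the origin; hence by the H\"ormander--Mikhlin multiplier theorem (applied to the Littlewood--Paley pieces defining the homogeneous norms, see \cite{BL}) $R_iR_j$ is bounded on $L^p(\R)$ for $1<p<\infty$, and, since it commutes with $(-\De)^{\al/2}$, it is bounded on $\dot H^\al_p(\R)$ for every $\al\in{\mathbb R}$.

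\emph{Step 2 (the case $\al=0$).} The zero extension is an isometry $L^p(\R_+)\to L^p(\R)$, and restriction only decreases the $L^p$ norm, so $\|\na^2\int_{\R_+}N(\cdot-y)f\,dy\|_{L^p(\R_+)}=\|R_iR_j\tilde f\|_{L^p(\R_+)}\le\|R_iR_j\tilde f\|_{L^p(\R)}\le c\|\tilde f\|_{L^p(\R)}=c\|f\|_{L^p(\R_+)}$, which is the asserted bound when $\al=0$.

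\emph{Step 3 (integer order and interpolation).} For a positive integer $k$ I would use the intrinsic description $\|g\|_{\dot H^k_p(\R_+)}=\sum_{|l|=k}\|D^l g\|_{L^p(\R_+)}$ given in Section \ref{notation}. Transferring the $x$-derivatives onto the source through $D^l_x N(x-y)=(-1)^{|l|}D^l_y N(x-y)$ and integrating by parts in $y$ over $\R_+$, one rewrites $D^l\pa_i\pa_j\int_{\R_+}N(\cdot-y)f\,dy$ as $\pa_i\pa_j\int_{\R_+}N(\cdot-y)D^l f(y)\,dy$ plus boundary terms supported on $\{y_n=0\}$. The leading term is exactly $R_iR_j\,\widetilde{D^l f}$ restricted to $\R_+$, which Step 2 controls by $c\|D^l f\|_{L^p(\R_+)}\le c\|f\|_{\dot H^k_p(\R_+)}$. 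Once the integer cases are settled, the case of general $\al\ge0$ follows by complex interpolation between $\dot H^{0}_p(\R_+)=L^p(\R_+)$ and $\dot H^{k}_p(\R_+)$ for an integer $k>\al$, using \eqref{interpolation0} and the interpolation of bounded operators.

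\emph{Main obstacle.} The delicate point is precisely the boundary contributions in Step 3: because multiplication by the characteristic function of $\R_+$ (equivalently, the zero extension) fails to preserve $\dot H^\al_p$ once $\al\ge1/p$, the distributional derivatives of $\tilde f$ carry single- and double-layer distributions on $\{y_n=0\}$, and one must show that $R_iR_j$ applied to these layers, restricted to the open upper half space, is still controlled in $L^p(\R_+)$ by the traces of $f$ and its lower-order derivatives. I expect this to require trace estimates together with the $L^p$ mapping properties of the associated boundary-layer potentials, and this is where the genuine work lies; the full-space multiplier bound in Step 1 is essentially classical.
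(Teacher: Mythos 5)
Your strategy coincides with the paper's: Calder\'on--Zygmund for the zero extension at order zero, commuting derivatives with the Newtonian kernel to reduce integer orders to order zero modulo boundary terms, and complex interpolation (via \eqref{interpolation0} and \eqref{interpolation1-3}) for general $\al\ge 0$. However, the step you explicitly postpone in your ``main obstacle'' paragraph --- controlling the boundary contributions created by the integration by parts --- is precisely the substance of the paper's proof, so as written your proposal has a genuine gap: you assert that the layer-potential terms should be controlled by trace estimates and mapping properties of boundary potentials, but you neither state nor prove any such estimate, and without it your Step 3 proves nothing beyond the case $\al=0$ (your Step 2, which is correct).

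Here is how the paper closes the gap, and what you would need to add. Work one derivative at a time: tangential derivatives commute with the potential with no boundary term, while the normal derivative gives
\begin{align*}
D_{x_n}\int_{\R_+} N(x-y) f(y)\,dy = \int_{\R_+} N(x-y)\, D_{y_n} f(y)\,dy - \int_{\Rn} N(x'-y',x_n)\, f(y',0)\,dy',
\end{align*}
so the only boundary term is the single-layer (Poisson-type) potential $N\bigl(f(\cdot,0)\bigr)$ of the trace of $f$, in the notation \eqref{N}; iterating this identity on the already-differentiated integrand never produces double layers, so that half of your worry can be sidestepped. The two ingredients needed to estimate this term are exactly the paper's Lemma \ref{trace} (trace theorem: $\|f(\cdot,0)\|_{\dot B^{1-1/p}_{pp}(\Rn)}\le c\|f\|_{\dot H^{1}_{p}}$) and Lemma \ref{poisson1} (Poisson bound: $\|\nabla_x N g\|_{\dot H^{\al}_{p}}\le c\|g\|_{\dot B^{\al-1/p}_{pp}(\Rn)}$, obtained from the boundedness of the Poisson operator and of the tangential Riesz transforms). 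Together they give $\|\nabla_x^2 N\bigl(f(\cdot,0)\bigr)\|_{L^p}\le c\|f(\cdot,0)\|_{\dot B^{1-1/p}_{pp}(\Rn)}\le c\|\nabla f\|_{L^p}$, hence $\|\nabla_x^3\int_{\R_+}N(\cdot-y)f(y)\,dy\|_{L^p}\le c\|\nabla_x f\|_{L^p}$, and the successive argument yields \eqref{0501-3} for every integer order, after which interpolation finishes as in your plan. Until you supply (or precisely cite) these two boundary estimates, the proof is incomplete at its crux.
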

The proof of Lemma \ref{lemma0929-22} is given  in Appendix \ref{appendixa}.

\subsection{Gaussian kernel}

The fundamental solution of the heat equation in $\R$ is denoted by
\[
 \Gamma(x,t)=\left\{\begin{array}{ll} \vspace{2mm}
 \frac{1}{ (2\pi t)^{\frac{n}{2}}}e^{-\frac{|x|^2}{4t}}&\mbox{if }t>0,\\
 0& \mbox{if }t\leq 0.
 \end{array}\right.
 \]

Let  $\Ga^* (x-y, t): = \Ga(x' -y',x_n +y_n,t)$. For $u_0 \in B^{\al}_{p \infty, 0}, \, 1\leq p \leq \infty, \, \al  \in {\mathbb R}$, we define  $\Ga_t* u_{0j}$ and $\Ga_t^*  * u_{0j}$ by
\begin{align*}
&\Ga_t* u_{0j}(x) :=  \int_{{\mathbb R}^n_+}\Ga(x-y,t) u_{0j}(y)  dy,\quad \Ga_t^* * u_{0j}(x)
:=  \int_{{\mathbb R}^n_+}\Ga^*(x-y, t) u_{0j}(y ) dy \quad \al \geq 0,\\
& \Ga_t* u_{0j}(x) := < u_0, \Ga(x-\cdot, t)>,\quad \Ga_t^* * u_{0j}(x) :=  < u_0, \Ga^*(x-\cdot, t)> \quad \al < 0,
\end{align*}
where $< \cdot, \cdot>$ is dual paring between $\dot B^{\al}_{p\infty} $ and $\dot B^{-\al}_{p'1,0}$.

\begin{lemm}\label{lemm0315}
\begin{itemize}
\item[(1)]
Let $ 1 \leq   p \leq  \infty$ and    $\al > 0$. Let $ u_0 \in \dot B^{ -\al }_{p \infty,0} $ with ${\rm div} \, u_0 =0$. Then,
\begin{align}
 \| \Ga_t * u_0 \|_{L^\infty_{\frac12 \al} L^p }, \,\, \|\Ga_t^* * u_0 \|_{L^\infty_{\frac12 \al } L^p } \leq c \| u_0\|_{\dot B^{ -\al  }_{p \infty} }.
\end{align}

\item[(2)]
Let $ 1 \leq    p \leq \infty $ and    $\al > 0$. Let $ u_0 \in \dot B^{ \al }_{p \infty,0} $ with ${\rm div} \, u_0 =0$. Then,
\begin{align}
 \| \Ga_t * u_0 \|_{L^\infty \dot B^\al_{p \infty} }, \,\, \|\Ga_t^* * u_0 \|_{L^\infty\dot B^\al_{p \infty} } \leq c \| u_0\|_{\dot B^{ \al }_{p \infty} }.
\end{align}
\end{itemize}
\end{lemm}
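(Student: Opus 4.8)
The plan is to reduce both half-space quantities to the whole-space heat semigroup $\Ga_t * \ti u_0$ acting on the zero extension $\ti u_0$ of $u_0$, and then to invoke the thermic (heat-kernel) characterization of the homogeneous Besov norm on $\R$. The divergence-free hypothesis plays no role here; only the smoothness index $\mp\al$ matters.

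First I would record two elementary identities. Since $u_0$ is extended by zero outside $\R_+$, for $x \in \R_+$ one has $\Ga_t * u_0(x) = \int_{\R}\Ga(x-y,t)\ti u_0(y)\,dy = (\Ga_t * \ti u_0)(x)$, the full convolution on $\R$; in the case of negative smoothness this is read through the duality pairing defining $\Ga_t * u_{0j}$, using $\langle u_0,\Ga(x-\cdot,t)\rangle = \langle \ti u_0,\Ga(x-\cdot,t)\rangle$. For the reflected kernel, writing $x^* = (x',-x_n)$ and noting $|x'-y'|^2 + (x_n+y_n)^2 = |x^*-y|^2$, we get $\Ga^*(x-y,t) = \Ga(x^*-y,t)$, so $\Ga_t^* * u_0(x) = (\Ga_t*\ti u_0)(x^*)$ for $x \in \R_+$. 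Consequently
\begin{align*}
\|\Ga_t * u_0\|_{L^p(\R_+)} \le \|\Ga_t * \ti u_0\|_{L^p(\R)}, \qquad \|\Ga_t^* * u_0\|_{L^p(\R_+)} = \|\Ga_t*\ti u_0\|_{L^p(\R_-)} \le \|\Ga_t * \ti u_0\|_{L^p(\R)}.
\end{align*}
For the Besov norms I would argue the same way: the restriction norm of $(\Ga_t*\ti u_0)|_{\R_+}$ is dominated by $\|\Ga_t*\ti u_0\|_{\dot B^\be_{p\infty}(\R)}$, while the reflected function extends to $(\Ga_t*\ti u_0)(\cdot^*)$ on all of $\R$, whose norm equals $\|\Ga_t*\ti u_0\|_{\dot B^\be_{p\infty}(\R)}$ because $x\mapsto x^*$ is orthogonal (hence commutes with the radial Littlewood--Paley projections) and measure preserving; thus
\begin{align*}
\|\Ga_t * u_0\|_{\dot B^\be_{p\infty}(\R_+)}, \ \|\Ga_t^* * u_0\|_{\dot B^\be_{p\infty}(\R_+)} \le \|\Ga_t * \ti u_0\|_{\dot B^\be_{p\infty}(\R)}.
\end{align*}
The zero-extension bound $\|\ti u_0\|_{\dot B^\be_{p\infty}(\R)} \le c\|u_0\|_{\dot B^\be_{p\infty,0}(\R_+)}$ from Section~\ref{notation} then lets me pass to the norm of $u_0$.

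It remains to control $\Ga_t * \ti u_0$ on the whole space. For part (1) I would use the classical thermic characterization (see \cite{BL,Tr}): for $s<0$ and $1\le p\le\infty$,
\begin{align*}
\sup_{t>0} t^{-s/2}\|\Ga_t * g\|_{L^p(\R)} \le c\|g\|_{\dot B^s_{p\infty}(\R)},
\end{align*}
applied with $s=-\al$ and $g = \ti u_0$, which gives exactly the $L^\infty_{\frac12\al}L^p$ bound. For part (2), with $s=\al>0$, I would instead prove the uniform-in-$t$ boundedness of the semigroup on $\dot B^\al_{p\infty}(\R)$: since $\Ga_t *$ is a Fourier multiplier it commutes with the dyadic projections $\dot\De_j$, and since it is convolution against a probability density it is an $L^p$-contraction by Young's inequality, so $\|\dot\De_j(\Ga_t*\ti u_0)\|_{L^p} = \|\Ga_t*\dot\De_j\ti u_0\|_{L^p}\le \|\dot\De_j\ti u_0\|_{L^p}$; multiplying by $2^{j\al}$ and taking the supremum over $j$ yields $\|\Ga_t*\ti u_0\|_{\dot B^\al_{p\infty}(\R)}\le\|\ti u_0\|_{\dot B^\al_{p\infty}(\R)}$ uniformly in $t$.

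The only genuinely delicate points are (i) verifying that the reflection $x\mapsto x^*$ preserves the homogeneous Besov norm on $\R$, which reduces to the radiality of the Littlewood--Paley symbols together with the orthogonality of the reflection, and (ii) the correct distributional interpretation of $\Ga_t * u_0$ and $\Ga_t^* * u_0$ through the duality pairing in the negative-smoothness case, so that the identities of the second paragraph remain valid. Both are routine once set up, and the substantive content is carried entirely by the thermic characterization and the $L^p$-contractivity of the heat semigroup.
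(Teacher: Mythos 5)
Your proposal is correct and takes essentially the same route as the paper: both pass to the zero extension $\tilde u_0$ and estimate the whole-space heat semigroup $\Gamma_t * \tilde u_0$ dyadically, the paper doing so via the explicit multiplier bound $M(t,j)\le c e^{-\frac18 t 2^{2j}}$ of Lemma \ref{multiplier2} together with the sum $t^{\frac12\al}\sum_j 2^{j\al}e^{-ct2^{2j}}\le c$, which is in effect a self-contained proof of the thermic characterization you invoke for part (1) and of the uniform bound you obtain from Young's inequality for part (2). One small difference in your favor: you treat the reflected kernel $\Gamma_t^*$ explicitly, via $\Gamma_t^* * u_0(x) = (\Gamma_t * \tilde u_0)(x^*)$ and the reflection-invariance of the whole-space Besov norm, whereas the paper's appendix writes out only the estimate for $\Gamma_t * \tilde u_0$ and leaves the reflected case implicit.
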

\begin{proof}
The proof of Lemma \ref{lemm0315} is given in Appendix \ref{appendix0718}.
\end{proof}

We define $\Gamma *f  $  and $\Gamma^* *f$ by
\begin{align*}
\Gamma *f (x,t):=\int^t_0\int_{\R_+}\Gamma(x-y,t-s)f(y,s)dyds,\\
\Gamma^* *f(x,t):=\int^t_0\int_{\R_+}\Gamma^*(x-y,t-s)f(y,s)dyds.
\end{align*}

\begin{lemm}\label{0929-1}
\begin{itemize}
\item[(1)]
Let $1 < p_1 \leq p < \infty$  and $\al > 0$ satisfying $ -1 +\al < \frac{n}{p_1} -\frac{n}p < 1$.  Let $f=\mbox{div}\mathcal F$ with    ${\mathcal F} \in L^{\infty}_{\frac12 \al +\frac12 -\frac{n}2(\frac1{p_1} -\frac1p)  } (0, \infty,  L^{p_1 }(\R_+)) $. Then,
\begin{align*}
\|  \Ga* {\mathbb P}f\|_{L^\infty_{\frac12 \al } (0, \infty; L^p (\R_+))}, \,\, \|  \Ga^* * {\mathbb P}f\|_{L^\infty_{\frac12 \al } (0, \infty; L^p (\R_+))} \leq c\| {\mathcal F}\|_{L^{\infty}_{\frac12 \al +\frac12 -\frac{n}2(\frac1{p_1} -\frac1p)  } (0,\infty;  L^{p_1}(\R_+))}.
\end{align*}
\item[(2)] Let $ 0 <   \al  < 2$ and $1 \leq p_1 \leq p$ such that $ 1 + \frac{n-1}p <  \frac{n}{p_1} < 1 + \frac{n}p $.  Let $f=\mbox{div}\mathcal F$ with    ${\mathcal F} \in L^{\infty}_{\frac12 -\frac{n}{2p_1} +\frac{n}{2p}} (0,\infty;  \dot B^{\al }_{p_1\infty}(\R_+)) $. Then,
\begin{align*}
\|  \Ga* {\mathbb P}f\|_{L^\infty (0, \infty;  \dot B^\al_{p\infty}(\R_+))}, \,\, \|  \Ga^* * {\mathbb P}f\|_{L^\infty(0, \infty;  \dot B^\al_{p\infty}(\R_+))} \leq c\| {\mathcal F}\|_{L^{\infty}_{\frac12 -\frac{n}{2p_1} +\frac{n}{2p}} (0,\infty;  \dot B^{\al }_{p_1\infty}(\R_+))}.
\end{align*}

\end{itemize}
\end{lemm}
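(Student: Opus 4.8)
The plan is to exploit the divergence structure $f=\mbox{div}\,{\mathcal F}$, transferring one spatial derivative onto the Gaussian kernel, and then to combine the $L^{p_1}\!\to L^p$ (resp. $\dot B^\al_{p_1\infty}\!\to\dot B^\al_{p\infty}$) smoothing of the heat semigroup with a Beta-function time integral to produce the claimed weighted bounds. The two hypotheses on the indices will turn out to be exactly the integrability constraints for that time convolution.

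First I would record the Helmholtz decomposition of ${\mathbb P}f$ on the half space, writing ${\mathbb P}f=f-\na\pi$, where $\pi$ solves the associated Neumann problem and is represented through the Newtonian kernel $N$ and its reflection across $\{x_n=0\}$. This splits
\[
\Ga*{\mathbb P}f=\Ga*\mbox{div}\,{\mathcal F}-\Ga*\na\pi ,
\]
and similarly for $\Ga^**{\mathbb P}f$. For the first term I integrate by parts in $y$ to move the divergence onto the kernel, producing an expression of the form $\na\Ga_{t-s}*{\mathcal F}$. For the pressure term I use that, up to the second-order Newtonian potential operator controlled by Lemma \ref{lemma0929-22}, $\na\pi$ is again of the form $\mbox{div}$ applied to ${\mathcal F}$, so that after transferring a derivative onto $\Ga$ it reduces to the same model quantity. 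The reflected kernel $\Ga^*$ is handled identically, the reflection $x_n\mapsto -x_n$ leaving all half-space norms unchanged.

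The model estimate is then
\[
\|\na\Ga_{t-s}*g\|_{L^p}\le c\,(t-s)^{-\ga}\|g\|_{L^{p_1}},\qquad \ga=\tfrac12+\tfrac n2\big(\tfrac1{p_1}-\tfrac1p\big),
\]
with the analogous bound in $\dot B^\al_{p\infty}$ for part (2), the smoothness index $\al$ being preserved by the semigroup so that the exponent $\ga$ is unchanged. Inserting $\|{\mathcal F}(s)\|_{L^{p_1}}\le s^{-\be}\|{\mathcal F}\|_{L^\infty_\be L^{p_1}}$ with $\be=\tfrac12\al+\tfrac12-\tfrac n2(\tfrac1{p_1}-\tfrac1p)$ in part (1), and $\be=\tfrac12-\tfrac n2(\tfrac1{p_1}-\tfrac1p)$ in part (2), yields
\[
\|\Ga*{\mathbb P}f(t)\|_{L^p}\le c\,\|{\mathcal F}\|_{L^\infty_\be L^{p_1}}\int_0^t(t-s)^{-\ga}s^{-\be}\,ds=c\,B(1-\ga,1-\be)\,t^{\,1-\ga-\be}\|{\mathcal F}\|_{L^\infty_\be L^{p_1}}.
\]
The arithmetic gives $1-\ga-\be=-\tfrac12\al$ in part (1) and $1-\ga-\be=0$ in part (2), reproducing exactly the target weights $t^{-\frac12\al}$ and $t^{0}$. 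The Beta integral converges iff $\ga<1$ and $\be<1$: the condition $\ga<1$ is precisely the right inequality $\tfrac n{p_1}-\tfrac np<1$ (resp. $\tfrac n{p_1}<1+\tfrac np$ in part (2)), while $\be<1$ is precisely the left inequality $-1+\al<\tfrac n{p_1}-\tfrac np$ of part (1).

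I expect the main obstacle to be the rigorous treatment of ${\mathbb P}$ on the half space. In the whole space ${\mathbb P}$ is a bounded Riesz-transform multiplier and the argument is immediate; here the pressure correction $\na\pi$ is nonlocal, and the integration by parts in $y$ that shifts the divergence onto $\Ga$ leaves a boundary contribution on $\{y_n=0\}$. Controlling this boundary term, using the vanishing encoded in the zero-extension space $\dot B^{-\al}_{p\infty,0}$ together with the parities of the combinations $\Ga\pm\Ga^*$, and verifying that the Newtonian-potential contribution genuinely falls under the scope of Lemma \ref{lemma0929-22}, is the delicate part. I anticipate that the remaining lower bound $1+\tfrac{n-1}p<\tfrac n{p_1}$ in part (2) enters precisely at this stage, guaranteeing the convergence of the boundary and pressure time integrals that the Beta-function bookkeeping above does not by itself control.
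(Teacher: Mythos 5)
Your part (1) is essentially the paper's own argument: the paper (Section \ref{projection}, based on \cite{CM}) writes ${\mathbb P}f=\mbox{div}\,{\mathcal F}'$ with $\|{\mathcal F}'\|_{L^{p_1}}\leq c\|{\mathcal F}\|_{L^{p_1}}$, integrates by parts to put $\nabla_y$ on the Gaussian (using $F|_{x_n=0}=0$ so no boundary term appears at this first order), applies Young's inequality to get the kernel bound with exponent $\ga=\frac12+\frac n2(\frac1{p_1}-\frac1p)$, and closes with exactly your Beta-function computation; your identification of the two index hypotheses with $\ga<1$ and $\be<1$ is also exactly how they enter there.

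The gap is in part (2), at the step you dispose of in one clause: ``the smoothness index $\al$ being preserved by the semigroup so that the exponent $\ga$ is unchanged.'' This is not a property you can invoke; it is the content of the lemma, and it fails as a formal commutation because derivatives of ${\mathcal F}'$ do \emph{not} vanish on $\{y_n=0\}$ even when ${\mathcal F}$ does. Concretely, the paper proves the Besov bound by establishing the scale of estimates at integer derivative orders $0,1,2$ and then \emph{real-interpolating the operator} (Lemma \ref{lemma0128} together with $(\dot H^{\al_1}_{p},\dot H^{\al_2}_{p})_{\te,\infty}=\dot B^{\al}_{p\infty}$); the order-$2$ estimate is where the difficulty sits: a second integration by parts produces the boundary integral
\begin{align*}
\int_0^t\int_{\Rn}D_x\Ga(x'-y',x_n,t-s)\,D_{y_n}F_j(y',0,s)\,dy'\,ds,
\end{align*}
which is handled by a separate anisotropic estimate (Lemma \ref{apendixlemma0718-2}, bounding such layer potentials by $\int_0^t(t-s)^{\frac1{2p}-1-\frac{\be}2}\|\cdot\|_{\dot B^{-\be}_{p}(\Rn)}ds$) combined with the trace theorem $\dot H^{1}_{p_1}(\R_+)\to\dot B^{1-1/p_1}_{p_1p_1}(\Rn)$, with the specific choice $\be=-1+\frac n{p_1}-\frac{n-1}p>0$. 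That choice is admissible precisely because $1+\frac{n-1}p<\frac n{p_1}$, and only after it does the boundary term acquire the same time singularity $(t-s)^{-\frac12-\frac n2(\frac1{p_1}-\frac1p)}$ as the interior term. You correctly \emph{anticipate} that the lower bound on $\frac n{p_1}$ must enter through a boundary contribution, but you supply no mechanism for estimating that contribution, and without the trace/layer-potential argument and the subsequent operator interpolation, the claimed $\dot B^{\al}_{p_1\infty}\to\dot B^{\al}_{p\infty}$ kernel bound — hence all of part (2) — remains unproved.
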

\begin{proof}
The proof  of Lemma \ref{0929-1} is given in Appendix \ref{appendix0292-1}.
\end{proof}

%
%

\subsection{ H$\ddot{\rm o}$lder type inequality}
The following H$\ddot{\rm o}$lder type inequality  is a well-known result (see Lemma 2.2 in \cite{chae}).
\begin{lemm}\label{0510prop}
Let $0 < \be  $ and $1 \leq p, q \leq \infty$. Then, for   $\frac1{r_i} + \frac1{s_i} = \frac1p$, $i=1,2$,
\begin{align*}
\| f_1f_2\|_{\dot B^{\be}_{pq}  }  \leq
        c \big( \| f_1\|_{ \dot  B^{\be }_{s_1 q}  } \| f_2\|_{  L^{r_1} }   + \| f_1\|_{ L^{s_2 }} \|f_2\|_{ \dot  B^{\be }_{r_2 q}}  \big).
\end{align*}
\end{lemm}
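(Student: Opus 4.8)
The plan is to reduce the inequality to the whole space ${\mathbb R}^n$ and then exploit a finite‑difference characterization of the homogeneous Besov seminorm together with the discrete Leibniz rule and H\"older's inequality. First I would pass from $\R_+$ to $\R$: by the definition of $\dot B^{\be}_{pq}(\R_+)$ as a restriction space, if $E$ denotes a universal (Stein–type) extension operator that is bounded simultaneously from $\dot B^{\be}_{s_1q}(\R_+)$, $\dot B^{\be}_{r_2q}(\R_+)$, $L^{r_1}(\R_+)$ and $L^{s_2}(\R_+)$ into the corresponding spaces over $\R$, then $(Ef_1)(Ef_2)$ is an admissible extension of $f_1f_2$, so that
\[
\|f_1f_2\|_{\dot B^{\be}_{pq}(\R_+)}\le \|(Ef_1)(Ef_2)\|_{\dot B^{\be}_{pq}(\R)}.
\]
Hence it suffices to establish the inequality on all of $\R$, after which the boundedness of $E$ restores the half–space norms on the right.

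On $\R$ I would use, for any integer $M>\be$, the equivalence (see \cite{BL,Tr})
\[
\|g\|_{\dot B^{\be}_{pq}(\R)}\approx \Big\||h|^{-\be}\,\|\De_h^M g\|_{L^p(\R)}\Big\|_{L^q(\R,\,dh/|h|^n)},
\]
where $\De_h g(x)=g(x+h)-g(x)$ and $\De_h^M$ is its $M$‑th iterate; for $q=\infty$, which is the case actually used in the paper, the outer norm is simply $\sup_h$. I would then expand by the discrete Leibniz rule
\[
\De_h^M(f_1f_2)(x)=\sum_{k=0}^M\binom{M}{k}\,(\De_h^k f_1)(x)\,(\De_h^{M-k}f_2)(x+kh),
\]
and estimate each summand by H\"older's inequality using the relations $\frac1{r_i}+\frac1{s_i}=\frac1p$, absorbing the shifts $x+kh$ by translation invariance of the $L^p$ norms. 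When $\be<1$ one takes $M=1$, the sum has only the two clean terms $f_1\,(\De_h f_2)$ and $(\De_h f_1)\,\tau_h f_2$, and these are bounded by $\|f_1\|_{L^{s_2}}\|\De_h f_2\|_{L^{r_2}}$ and $\|\De_h f_1\|_{L^{s_1}}\|f_2\|_{L^{r_1}}$ respectively; dividing by $|h|^{\be}$, taking the $L^q(dh/|h|^n)$ (or $\sup$) norm and recognizing the difference characterization collapses them exactly to the two stated terms.

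The main obstacle is the range $\be\ge 1$. There one is forced to use $M\ge 2$, and the cross terms with $1\le k\le M-1$, of the form $(\De_h^k f_1)(\De_h^{M-k}f_2)$, distribute the $\be$ smoothness unevenly between the two factors, producing mixed difference seminorms rather than the pure products $\|f_1\|_{\dot B^{\be}_{s_1q}}\|f_2\|_{L^{r_1}}$ and $\|f_1\|_{L^{s_2}}\|f_2\|_{\dot B^{\be}_{r_2q}}$ that appear on the right. To recombine them I would interpolate: the quantity $\sup_h|h|^{-k\be/M}\|\De_h^k f_i\|_{L^{\cdot}}$ controls a fractional Besov seminorm of order $k\be/M<\be$, and a Gagliardo–Nirenberg type estimate bounds such intermediate seminorms by a geometric mean of the $L^{\cdot}$ norm and the full $\dot B^{\be}_{\cdot q}$ norm; Young's inequality then folds the cross contributions back into the two principal terms. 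An equivalent route, which I would mention as an alternative that bypasses the cross‑term bookkeeping by reorganizing frequencies, is Bony's paraproduct decomposition $f_1f_2=T_{f_1}f_2+T_{f_2}f_1+R(f_1,f_2)$, in which the two paraproducts yield the two stated terms and the positivity $\be>0$ is precisely what makes the high–high remainder $R$ summable. A secondary technical point is securing a single extension operator $E$ bounded on the whole family of homogeneous spaces involved.
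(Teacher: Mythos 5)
The paper offers no proof of this lemma at all: it is quoted as a known result, with a citation to Lemma 2.2 of \cite{chae}, whose proof goes through Littlewood--Paley decomposition and Bony's paraproduct --- that is, essentially the ``alternative route'' you only mention in passing at the end. Your primary argument is therefore a genuinely different, real-variable proof, and its skeleton is correct: the discrete Leibniz rule for $\Delta_h^M$, H\"older's inequality with the split $\frac1s=\frac{1-\te}{s_2}+\frac{\te}{s_1}$, $\frac1r=\frac{\te}{r_1}+\frac{1-\te}{r_2}$, $\te=k/M$ (so that $\frac1s+\frac1r=\frac1p$ automatically), Gagliardo--Nirenberg interpolation of the intermediate seminorms, and Young's inequality $a^\te b^{1-\te}\le \te a+(1-\te)b$ do recombine every cross term into the two stated products. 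Two points need more care than your sketch provides. First, for finite $q$ the outer $L^q(dh/|h|^n)$ norm of a cross term must itself be split by H\"older in $h$ (exponents $q/\te$ and $q/(1-\te)$), and the interpolation inequality must then be invoked in its Littlewood--Paley form so that it produces exactly those third indices; for $q=\infty$ --- the only case the paper actually uses --- this issue disappears since everything is a supremum in $h$. Second, the simultaneous extension operator is not automatic for homogeneous spaces: you should either prove that a higher-order reflection extension (with enough reflections, say $N>\be+1$) is bounded at once on $\dot B^{\be}_{s_1 q}(\R_+)$, $\dot B^{\be}_{r_2 q}(\R_+)$, $L^{r_1}(\R_+)$ and $L^{s_2}(\R_+)$, or cite this explicitly, since near-optimal extensions chosen separately for each norm need not coincide, while your product trick requires one common extension per factor. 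With those two repairs your proof stands; what it buys over the cited paraproduct proof is a Fourier-free argument that transfers to any domain admitting an intrinsic difference characterization, at the price of the cross-term bookkeeping that the frequency decomposition avoids, since in the paraproduct proof the two principal terms appear immediately and the positivity $\be>0$ is used only to sum the high-high remainder.
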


\subsection{Helmholtz projection ${\mathbb P}$} \label{projection}
Let ${\mathbb P}$ be a  Helmholtz projection operator in $\R_+$. Let $f=\mbox{div}{\mathcal F}, \ {\mathcal F}=(F_{kl})_{k,l=1}^n$, $F_{kl}=F_{lk}$, with $F_{mk}|_{x_n=0}=0$.
 Then ${\mathbb P}f$ can be rewritten as
${\mathbb P}f=\mbox{div}{\mathcal F}'$,  ${\mathcal F'}=(F_{km}')_{k,m=1}^n$, where
\begin{align*}
F'_{nm}&=F_{nm}-\delta_{nm}F_{nn}, \ m=1,\cdots, n,\\
F'_{\beta \gamma}&=F_{\beta \gamma}-\delta_{\beta \gamma}F_{nn}+D_{x_\gamma}
\Big(\sum_{q=1}^n \int_{\R_+}D_{y_q}N^+(x,y)F_{\beta q}(y) dy \\
&\quad+\int_{\R_+} \Big( D_{y_n}N^+(x,y)F_{n\beta}(y)-D_{y_\beta}N^+(x,y)F_{nn}(y)\Big) dy   \Big) \quad  \beta,\gamma\neq n,\\
F'_{\beta n}&=-\sum_{\gamma=1}^{n-1}D_{x_\gamma}\int_{\R_+}D_{x_n}N^+(x,y)F_{\beta \gamma}(y) dy+D_{x_\beta}\int_{R_+}D_{x_n}N^+(x,y) F_{nn}(y)dy\\
&\quad-2F_{\beta n}(x)-2\sum_{\gamma=1}^{n-1}D_{x_\gamma}\int_{\R_+}D_{x_\gamma}N^{-}(x,y)F_{\beta n}(y) dy \quad \beta\neq n.
\end{align*}
(See Section 3 of  \cite{CM}  for the details).
Here $N^+(x,y):=N(x-y)+N(x-y^*)$ and $N^{-
}(x,y)=N(x-y)-N(x-y^*)$. From Lemma \ref{lemma0929-22} and real interpolation, we have
\begin{align}
\notag \|{\mathcal F}'\|_{\dot{H}^\al_p} & \leq c\|{\mathcal F}\|_{\dot{H}^\al_p} \quad 0 \leq \al , \,\, 1<p<\infty,\\
\label{0718-1} \|{\mathcal F}'\|_{\dot{B}^\al_{p\infty}}& \leq c\|{\mathcal F}\|_{\dot{B}^\al_{p\infty}} \quad  0 < \al, \,\,  1<p<\infty.
\end{align}

\section{Proof of  Theorem \ref{thm-stokes}}
\label{theoremthm-stokes}
\setcounter{equation}{0}

First, we decompose the Stokes equations \eqref{maineq-stokes} as the following two equations:
\begin{align}
\label{stokes.zero}
\notag v_t - \De v+\nabla \pi  =0, \qquad {\rm div} \, v =0 \qquad \mbox{ in
}\,\,\R_+ \times (0,\infty),\\
v|_{t =0} = u_0 \quad \mbox{ and }\quad v|_{x_n =0} =0,
\end{align}
and
\begin{align}\label{maineq-stokesh=0}
\begin{array}{l}\vspace{2mm}
V_t - \De V + \na \Pi =\mbox{div}{\mathcal F}, \qquad \mbox{div } V =0 \mbox{ in }
 \R_+\times (0,\infty),\\
\hspace{30mm}V|_{t=0}= 0, \qquad  V|_{x_n =0} = 0.
\end{array}
\end{align}

Let $u = V + v$ and $p =\pi+\Pi$. Then, $(u,p)$ is a solution of  \eqref{maineq-stokes}.

\subsection{Estimate of $(v,\pi)$}

The solution $(v,\pi)$ of \eqref{stokes.zero} is represented by (see  \cite{So})
\begin{equation}\label{expression-v-zero}
v_i (x,t) = \int_{{\mathbb R}^n_+} G_{ij}(x,y, t)
u_{0j}(y) dy,
\end{equation}
\begin{equation}\label{expression-p-zero}
\pi(x,t) = \int_{{\mathbb R}^n_+} P(x,y, t)
\cdot  u_0 (y)dy,
\end{equation}
where $G$ and $P$ are defined by
\begin{align}\label{formulas-v}
\notag G_{ij} &= \de_{ij} (\Ga(x-y, t) - \Ga(x-y^*,t ))\\
& \qquad  + 4(1 -\de_{jn})
\frac{\pa}{\pa x_j} \int_0^{x_n} \int_{{\mathbb R}^{n-1}}
            \frac{\pa N(x-z)}{\pa x_i} \Ga(z -y^* , t) dz,
\end{align}
\begin{align}\label{formulas-p}
\notag P_j(x,y,t) & =4 (1 - \de_{jn}) \frac{\pa }{\pa x_j}\Big[ \int_{{\mathbb
R}^{n-1}} \frac{\pa N(x' - z', x_n)}{\pa x_n} \Ga(z' -y', y_n,t) dz'\\
& \quad +\int_{{\mathbb R}^{n-1}} N(x' -z',x_n) \frac{\pa \Ga(z'-y', y_n,
t)}{\pa y_n} dz'\Big].
\end{align}

From proofs of Lemma 3.1 and Lemma 3.3   in \cite{CK}, we get
\begin{align}\label{v-initial}
\notag \| v(t)\|_{\dot H^{k}_{p}} &\leq c \big(  \|\Ga_t * u_0\|_{\dot H^{k}_{p }} +  \|\Ga_t^** u_0\|_{\dot H^{k}_{p }} \big) \quad 1< p< \infty, \,\, k \in {\mathbb N} \cup \{ 0\},\\
\| v(t)\|_{\dot B^{\al}_{p\infty}} & \leq c \big(  \|\Ga_t * u_0\|_{\dot B^{\al}_{p\infty }} +  \|\Ga_t^** u_0\|_{\dot B^{\al}_{p\infty }} \big) \quad 1 < p < \infty, \quad \al > 0.
\end{align}

\subsection{Estimate of $(V,\Pi)$}

Let  ${\mathbb P}f$ be  the  Helmholtz projection of $f$.  Note that ${\rm div}\, {\mathbb P} f =0$ and  $({\mathbb P} f)_n|_{x_n =0} =0$.  We define  $(V, \Pi_0)$  by
\begin{equation}\label{expression-V}
V_i (x,t) =\int_0^t \int_{{\mathbb R}^n_+} G_{ij}(x,y, t-\tau)
({\mathbb P} f)_j(y,\tau) dyd\tau,
\end{equation}
\begin{equation}\label{expression-p}
\Pi_0(x,t) =\int_0^t \int_{{\mathbb R}^n_+} P(x,y, t-\tau)
\cdot  ({\mathbb P} f) (y,\tau)dyd\tau,
\end{equation}
where $G$ and $P$ are defined by
\eqref{formulas-v}and \eqref{formulas-p}.
Then  $(V, \Pi_0)$ satisfies
\begin{align*}
\begin{array}{l}\vspace{2mm}
V_t - \De V + \na \Pi_0 ={\mathbb P} f, \qquad \mbox{div } V =0, \mbox{ in }
 \R_+\times (0,\infty),\\
\hspace{30mm}V|_{t=0}= 0, \qquad  V|_{x_n =0} = 0.
\end{array}
\end{align*}
(See \cite{So}.) Let $\Pi = \Pi_0 + {\mathbb Q}f$. Then, $(V, \Pi)$ is the solution of \eqref{maineq-stokesh=0}.

Let $1<p<\infty$, $1 \leq q \leq \infty$, and $0 \leq \al \leq 2$.  In Section 3 in  \cite{CK}, the authors showed that
 $V $ defined by
\eqref{expression-V}   has the following estimates (using real interpolations);
\begin{align}\label{0417-2}
\notag \|   V(t) \|_{  \dot H^{k}_{p} } &
   \leq c \big( \|   \Gamma* {\mathbb P} f(t)\|_{    \dot H^{k}_{p}} +
  \|   \Gamma^* * {\mathbb P} f(t) \|_{   \dot H^{k}_{p}} \big) \quad 1< p< \infty, \,\, k \in {\mathbb N} \cup \{ 0\},\\
\|   V(t) \|_{  \dot B^{\al}_{p\infty} } &
   \leq c \big( \|   \Gamma* {\mathbb P} f(t)\|_{    \dot B^{\al}_{p\infty}} +
  \|   \Gamma^* * {\mathbb P} f(t) \|_{   \dot B^{\al}_{p\infty}} \big) \quad 1< p< \infty, \,\, \al > 0.
\end{align}

%
%
%

\subsection{Estimate of $(u,p)$}

Note that $(u, p)$ defined by $u = V + v$ and $p = \pi + \Pi_0 + {\mathbb Q} \,  {\rm div} \, {\mathcal F}$ is the solution of \eqref{maineq-stokes}. From \eqref{v-initial},  \eqref{0417-2}, Lemma \ref{lemm0315} and Lemma \ref{0929-1}, we obtain Theorem \ref{thm-stokes}.

\section{Nonlinear problem}

\label{nonlinear}
\setcounter{equation}{0}

In this section, we would like to give  proof of Theorem \ref{thm-navier}. For this purpose, we  construct approximate velocities and then
derive uniform convergence in  $L^\infty  \dot B^\al_{p\infty}$.

\subsection{Approximating solutions}

Let $(u^1,p^1)$ be the solution of the Stokes equations
\begin{align}
\begin{array}{l}\vspace{2mm}
u^1_t - \De u^1 + \na p^1 =0, \qquad \mbox{div } u^1 =0, \mbox{ in }
 \R_+\times (0,\infty),\\
\hspace{30mm}u^1|_{t=0}= u_0, \qquad  u^1|_{x_n =0} = 0.
\end{array}
\end{align}
Let $m\geq 1$.
After obtaining $(u^1,p^1),\cdots, (u^m,p^m)$ construct $(u^{m+1}, p^{m+1})$ which satisfies the following equations
\begin{align}
\label{maineq5}
\begin{array}{l}\vspace{2mm}
u^{m+1}_t - \De u^{m+1} + \na p^{m+1} =f^m, \qquad \mbox{div } u^{m+1} =0, \mbox{ in }
 \R_+\times (0,\infty),\\
\hspace{30mm}u^{m+1}|_{t=0}= u_0, \qquad  u^{m+1}|_{x_n =0} = 0,
\end{array}
\end{align}
where $f^m=-\mbox{div}(u^m\otimes u^m)$.

\subsection{Uniform boundedness in $L^\infty_{\frac12 -\frac{n}{2p_0}}L^{p_0}$,  $n < p_0$}
\label{uniform1}

From  (1) of  Theorem \ref{thm-stokes}, we have
\begin{align}
\label{uc1-1}
\| u^{1}\|_{L^{\infty}_{\frac12 -\frac{n}{2p_0} }   L^{p_0} }
 \leq c_0  \|u_0\|_{  \dot B^{-1 +n/p_0}_{p_0 \infty,0}}:=N_0.
\end{align}
From  (1) of Theorem \ref{thm-stokes}, taking $p_1 =\frac{p_0}2$, we have
\begin{align}\label{1011-1-2}
\notag  \| u^{m+1}\|_{L^{\infty}_{\frac12 -\frac{n}{2p_0}}  L^{p_0} } & \leq  c \big( \| u_0\|_{\dot B^{-1 +n/p_0}_{p_0 \infty,0} }  +    \| u^{m} \otimes u^{m} \|_{L^{\infty}_{1 -\frac{n}{p_0}} L^{p_0/2} } \big)\\
  & \leq  c_1 \big( \| u_0\|_{\dot B^{ -1 + n/p_0}_{p_0 \infty,0} }  +    \| u^{m}  \|^2_{L^\infty_{\frac12 -\frac{n}{2p_0}} L^{p_0}} \big).
\end{align}

Under the hypothesis  $\|u^m\|_{L^{\infty}_{\frac12 -\frac{n}{2p_0}}  L^{p_0} }\leq M_0$,  \eqref{1011-1-2} leads to the estimate
\[
\|u^{m+1}\|_{L^{\infty}_{\frac12 -\frac{n}{2p_0}}  L^{p_0} }\leq c_1 \big( N_0+  M_0^2 \big).
\]
Choose $M_0$ and $N_0$  so small that
\begin{align}\label{0508-2}
 M_0  \le \frac{1}{2c_1}\mbox{ and }N_0<\frac{M_0}{2c_1}.
\end{align}
By the mathematical induction argument, we conclude
\begin{align}\label{0509-1}
\|u^{m}\|_{ L^{\infty}_{\frac12 -\frac{n}{2p_0}} L^{p_0} }\leq M_0 \,  \mbox{ for all } \, m=1,2\cdots.
\end{align}

\subsection{Uniform boundedness in $L^\infty\dot B^{-1 +n/p}_{p\infty}$}
\label{uniform2}

From (2) of   Theorem \ref{thm-stokes}, we have
\begin{align}
\label{0509-1-1}
\| u^{1}\|_{L^\infty  \dot B^{-1 + n/p}_{p\infty} }
& \leq c_2 \|u_0\|_{  \dot B^{-1+n/p}_{p\infty,0}}:=N.
\end{align}

We take $1 \leq p_1< p$ satisfying $1 + \frac{n-1}p < \frac{n}{p_1} < 1 + \frac{n}p$.
From (2) of   Theorem \ref{thm-stokes} to obtain
\begin{align}\label{1011-1-2-1}
 \| u^{m+1}\|_{L^\infty  \dot{B}^{-1 + n/p}_{p\infty} } & \leq  c \big( \| u_0\|_{\dot B^{-1+ n/p}_{p\infty,0} }  +    \| u^{m} \otimes u^{m} \|_{L^\infty_{\frac12 -\frac{n}{2p_1} +\frac{n}{2p}} \dot{B}^{-1 +n/p}_{p_1\infty} } \big).
\end{align}
Let $\frac1{p_1} = \frac1{p_0} +\frac1{p}$ such that $1 - \frac1p < \frac{n}{p_0} <1$. By Lemma \ref{0510prop},   we have
\begin{align}\label{0510-3-1}
\notag\|(u^m\otimes u^m)\|_{L^{\infty}_{\frac12 -\frac{n}{2p_1} +\frac{n}{2p}}  \dot B^{-1+ n/p}_{p_1\infty} }
& \leq c \|u^m \|_{L^\infty  \dot B^{-1+ n/p}_{p \infty} } \|u^m  \|_{L^\infty_{\frac12 -\frac{n}{2p_0}} L^{p_0} }\\
& \leq c \|u^m \|_{L^\infty \dot B^{ -1+ n/p }_{p\infty} } \|u^m  \|_{L^{ \infty}_{\frac12 -\frac{n}{2p_0}} L^{p_0} }.
\end{align}
From \eqref{1011-1-2-1}-\eqref{0510-3-1}, we have
\begin{align}\label{eq0412-3}
 \| u^{m+1}\|_{L^\infty \dot B^{-1+ n/p}_{p\infty}}
  \leq   c_1 \big( N
 +  \|u^m  \|_{L^{ \infty}_{\frac12 -\frac{n}{2p_0}} L^{p_0} } \|u^m\|_{ L^\infty\dot B^{-1+ n/p}_{p\infty}} \big).
\end{align}

Under the hypothesis
$\|u^m\|_{L^\infty \dot B^{-1+ n/p}_{p\infty}}\leq M$,
\eqref{eq0412-3} leads to the estimate
\[
\|u^{m+1}\|_{L^\infty \dot B^{-1+ n/p}_{p\infty}}\leq c_1 \big( N+  M_0M \big).
\]
Choose that  $M_0$ is  small  and $M$ is  large  so that that
\begin{equation}
\label{0508-2-2} M_0 \leq  \frac1{2c_1}, \qquad  2c_1 N\leq M.
\end{equation}
By the mathematical induction argument, we conclude
\begin{equation}
\label{0509-1-1}
\|u^{m}\|_{L^\infty \dot B^{-1+ n/p}_{p\infty}}\leq M \mbox{ for all }m=1,2\cdots.
\end{equation}

\subsection{Uniform convergence}

Let $U^m=u^{m+1}-u^m$ and $P^m=p^{m+1}-p^m$.
Then, $(U^m,P^m)$ satisfy the equations
\[
\begin{array}{l}\vspace{2mm}
U^m_t - \De U^m + \na P^m =-\mbox{div}(u^m\otimes U^{m-1}+U^{m-1}\otimes u^{m-1}), \qquad \mbox{div } U^{m} =0, \mbox{ in }
 \R_+\times (0,\infty),\\
\hspace{30mm}U^{m}|_{t=0}= 0, \qquad  U^{m}|_{x_n =0} =0.
\end{array}
\]

Recall  the uniform estimates \eqref{0509-1} and \eqref{0509-1-1} for the approximate solutions. From  Theorem \ref{thm-stokes} and  Lemma \ref{0510prop} we have
\begin{align}\label{0425-3}
\notag\| U^m\|_{L^{\infty}_{\frac12 -\frac{n}{2p_0}} L^{p_0} }
& \leq c \big( \| u^{m-1}\|_{L^{\infty}_{\frac12 -\frac{n}{2p_0}} L^{p_0} }  + \| u^{m}\|_{L^{\infty}_{\frac12 -\frac{n}{2p_0}}L^{p_0} }  \big) \| U^{m-1}\|_{L^{\infty}_{\frac12 -\frac{n}{2p_0}}L^{p_0} }\\
 & \leq c_5 M_0 \| U^{m-1}\|_{L^{\infty}_{\frac12 -\frac{n}{2p_0}} L^{p_0} },
\end{align}
and
\begin{align}\label{0425-2}
\notag \|U^m\|_{L^\infty \dot B^{-1+ n/p}_{p\infty}}
&\leq c\|u^m\otimes U^{m-1}+U^{m-1}\otimes u^{m-1}\|_{L^{\infty}\dot B^{-1+ n/p}_{p_1\infty} }\\
\notag &\leq c \big(\|u^m\|_{L^\infty \dot B^{-1+ n/p}_{p\infty}} + \|u^{m-1}\|_{L^\infty \dot B^{-1+ n/p}_{p\infty}} \big) \|U^{m-1}\|_{L^{\infty}_{\frac12 -\frac{n}{2p_0}} L^{p_0}}\\
    \notag &\qquad + c\big( \|u^m\|_{L^{ \infty}_{\frac12 -\frac{n}{2p_0}} L^{p_0}}  + \|u^{m-1}\|_{L^{\infty}_{\frac12 -\frac{n}{2p_0}}L^{p_0}} \big)\|U^{m-1}\|_{L^\infty\dot B^{-1+ n/p}_{p\infty}}  \big)\\
     &\leq c_6 M \|U^{m-1}\|_{L^{\infty}_{\frac12 -\frac{n}{2p_0}}  L^{p_0} }    +   c_6M_0\|U^{m-1}\|_{L^\infty \dot B^{-1+ n/p}_{p\infty}}.
\end{align}
We take the constant $c_6$ greater than $c_5$, that is,
\begin{align}
c_6>c_5.
\end{align}

From \eqref{0425-3}, if $c_5M_0<1$, then $\sum_{m=1}^\infty\| U^m\|_{ L^{\infty}_{\frac12 -\frac{n}{2p_0}}  L^{p_0} }$ converges, that is,
$$\sum_{m=1}^\infty U^m\mbox{ converges in }L^\infty_{ \frac12 -\frac{n}{2p_0}}  L^{p_0} .$$

Take $A>0$ satisfying $A(c_6-c_5)M_0\geq c_6 M$. Then from \eqref{0425-3} and \eqref{0425-2} it holds that
\begin{align*}
 \|U^m\|_{L^\infty \dot B^{-1+ n/p}_{p\infty}}+A\| U^m\|_{L^{\infty}_{\frac12 -\frac{n}{2p_0}} L^{p_0}}\\
 \leq c_6M_0(\|U^{m-1}\|_{L^\infty\dot B^{-1+ n/p}_{p\infty}}+A\| U^{m-1}\|_{L^{\infty}_{\frac12 -\frac{n}{2p_0}} L^{p_0}})
 \end{align*}
 Again if $c_6M_0<1$, then $\sum_{m=1}^\infty(\|U^m\|_{L^\infty  \dot B^{-1+ n/p}_{p\infty}}+A\| U^m\|_{L^{\infty}_{\frac12 -\frac{n}{2p_0}} L^{p_0}})$ converges. This implies that
 $\sum_{m=1}^\infty\|U^m\|_{L^\infty \dot B^{-1+ n/p}_{p\infty}}$ converges, that is, $$\sum_{m=1}^\infty U^m\mbox{ converges in }L^\infty \dot B^{-1+ n/p}_{p\infty}.$$

Therefore, if $M_0$ satisfies the condition \eqref{0508-2-2} with the additional conditions
\begin{align}
M_0< \frac{1}{c_6},
\end{align}
then  $u^m=u^1+\sum_{k=1}^mU^{k}$ converges to $u^1+\sum_{k=1}^\infty U^{k}$ in  $L^\infty \dot B^{-1+ n/p}_{p\infty} \cap L^{\infty}_{\frac12 -\frac{n}{2p_0}}L^{p_0} $.
Set $u:=u^1+\sum_{k=1}^\infty U^{k}.$

\subsection{Existence}
In this section, we will show that $u$ satisfies weak formulation of the Navier-Stokes equations \eqref{maineq2}, that is, $u$ is a weak solution of the Navier-Stokes equations \eqref{maineq2} with appropriate distribution $p$.

Let $u$ be the same one constructed in  the previous Section. Because $u_m \ri u$ in $L^{\infty}_{\frac12 -\frac{n}{2p_0}}L^{p_0} $  by \eqref{0509-1}, we have
\[
\|u\|_{L^{\infty}_{\frac12 -\frac{n}{2p_0}} L^{p_0} }, \quad \|u^m\|_{L^{\infty}_{\frac12 -\frac{n}{2p_0}} L^{p_0} }\leq  M_0.\]

Let $\Phi\in C^\infty_{0}({\R_+} \times [0,T))$ with $\mbox{div }\Phi=0$ for some $T > 0$. Observe that
\begin{align*}
-\int^\infty_0\int_{\R_+} u^{m+1}\cdot \Delta\Phi dxdt&=\int^\infty_0\int_{\R_+}u^{m+1}\cdot \Phi_t+(u^m\otimes u^m): \nabla \Phi dxdt  +\int_{\R_+} u_0 \cdot \Phi(x,0)dx.
\end{align*}
Now, send $m$ to the  infinity, then,  $u^m\rightarrow u$ in $L^{\infty}_{\frac12 -\frac{n}{2p_0}}   L^{p_0} $.   Then, we have
\begin{align*}
\int^T_0\int_{\R_+} \big( u^{m+1} - u \big)\cdot \Delta\Phi dxdt & \leq  \int^T_0 \|  u^{m+1} - u \|_{L^{p_0} } \| \Delta\Phi\|_{L^{p'_0}}dt\\
& \leq \|  u^{m+1} - u \|_{L^\infty_{\frac12  -\frac{n}{2p_0}}  L^{p_0} }   \int^T_0 t^{-\frac12  +\frac{n}{2p_0} }  \| \Delta\Phi\|_{L^{p'_0}}dt\\
& \ri 0 \quad \mbox{as         } m \ri \infty.
\end{align*}
Similarly, we have
\begin{align*}
\int^T_0\int_{\R_+} \big( u^{m+1} - u \big)\cdot \Phi_t dxdt &  \ri 0 \quad \mbox{as         } m \ri \infty.
\end{align*}
Because $p_0 > n$, we have
\begin{align*}
\int^\infty_0\int_{\R_+}\big(u^m\otimes ( u^m - u)\big): \nabla \Phi dxdt & \leq \|u^m\otimes(  u^{m+1} - u )\|_{L^{\infty}_{1  -\frac{n}{p_0}} L^{p_0/2}}   \int^T_0 t^{-1  +\frac{n}{p_0} }  \| \Delta\Phi\|_{L^{(p_0/2)'}}dt\\
& \leq \|u^m -u \|_{L^{\infty}_{\frac12   -\frac{n}{2p_0}}L^{p_0} } \|u^m \|_{L^{\infty}_{\frac12   -\frac{n}{2p_0}}L^{p_0}}  \int^T_0 t^{-1 +\frac{n}{p_0} }  \| \Delta\Phi\|_{L^{(p_0/2)'}}dt\\
& \ri 0 \quad \mbox{as         } m \ri \infty.
\end{align*}
Hence, we have the identity
\begin{align*}
-\int^\infty_0\int_{\R_+}u\cdot \Delta \Phi dxdt&=\int^\infty_0\int_{\R_+}u\cdot \Phi_t+(u\otimes u): \nabla\Phi dxdt +\int_{\R_+} u_0 \cdot \Phi(x,0)dx.
\end{align*}
Therefore,  $u$ is a weak solution of the Navier-Stokes equations \eqref{maineq2}.  This completes the proof of the existence part of Theorem \ref{thm-navier}.

\subsection{Uniqueness in space $L^{\infty}_{\frac12 -\frac{n}{2p_0}} L^{p_0} $}
Let 
  $ u_1\in  L^{\infty}_{\frac12 -\frac{n}{2p_0}}   L^{p_0} $ be  another weak solution of the Navier-Stokes equations \eqref{maineq2} with pressure $p_1$. Then,
 $(u-u_1,p-p_1)$ satisfies the equations
\begin{align*}
(u-u_1)_t - \De (u-u_1) + \na (p-p_1)& =-\mbox{div}(u\otimes (u-u_1)+(u-u_1)\otimes u_1)\mbox{ in }
 \R_+\times (0,\infty), \\
 {\rm div} \, (u-u_1)& =0,
 \mbox{ in }\R_+\times (0,\infty),\\
 (u-u_1)|_{t=0}= 0, &\quad (u-u_1)|_{x_n =0} =0.
\end{align*}
Applying  the estimate of Theorem \ref{thm-stokes} to the above Stokes equations,  we have
\begin{align*}
\| u-u_1\|_{L^{\infty}_{\frac12 -\frac{n}{2p_0}} L^{p_0}}
 \leq c \|u\otimes (u-u_1)+(u-u_1)\otimes u_1 \|_{L^{\infty}_{1 -\frac{n}{p_0}} L^{p_0}}\\
 \leq c_5 ( \|u\|_{L^{\infty}_{\frac12 -\frac{n}{2p_0}} L^{p_0}}+\|u_1\|_{L^{\infty}_{\frac12 -\frac{n}{2p_0}}L^{p_0}})\| u-u_1\|_{L^{\infty}_{\frac12 -\frac{n}{2p_0}} L^{p_0}}.
 \end{align*}
 Taking  $M_0$ to satisfy $2c_5 M_0 <1$, we have
\begin{align*}
\| u-u_1\|_{L^{\infty}_{\frac12 -\frac{n}{2p_0}} L^{p_0}}
 < \| u-u_1\|_{L^{\infty}_{\frac12 -\frac{n}{2p_0}} L^{p_0}}.
 \end{align*}
This implies that $u \equiv u_1$ in $\R_+ \times (0, \infty)$ and so we complete the proof of the uniqueness part of Theorem \ref{thm-navier}.

\appendix
\setcounter{equation}{0}

\section{Proof of Lemma \ref{lemma0929-22}}
\label{appendixa}
The following lemma is well known trace theorem  (see Theorem 6.6.1  in \cite{BL}).
\begin{lemm}\label{trace}
Let $1< p < \infty$. If   $ f \in \dot H^{\al}_{p}$  for $\al > \frac1p$, then $f |_{x_n =0} \in \dot B^{\al -\frac1p}_{pp} (\Rn)$ with
\begin{align*}
 \| f |_{x_n =0} \| _{\dot B_{pp}^{\al -1/p}(\Rn)} \leq c \| f\|_{\dot H^\al_{p} }.
\end{align*}
\end{lemm}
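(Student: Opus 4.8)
The plan is to prove the inequality on all of $\R$ first and then descend to the half space through the definition of the restriction norm. First I would use that $\|f\|_{\dot H^\al_p(\R_+)}$ is an infimum over extensions: pick $F\in \dot H^\al_p(\R)$ with $F|_{\R_+}=f$ and $\|F\|_{\dot H^\al_p(\R)}\le 2\|f\|_{\dot H^\al_p(\R_+)}$. Since $\al>\frac1p$, the slice map $x_n\mapsto F(\cdot,x_n)$ is continuous into $L^p(\Rn)$, so $f|_{x_n=0}=F|_{x_n=0}$ is well defined and it remains to show $\|F|_{x_n=0}\|_{\dot B^{\al-1/p}_{pp}(\Rn)}\le c\|F\|_{\dot H^\al_p(\R)}$.

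Next I would run a Littlewood--Paley argument in the full variable. Writing $F=\sum_{j}\De_j F$ with $\De_jF$ spectrally localized to $\{|\xi|\approx 2^j\}$, the key ingredient is a one-dimensional sampling (Plancherel--Polya) estimate in the normal direction: for $h\in L^p(\R)$ with Fourier support in $\{|\xi|\le 2^{j+1}\}$ one has $\|h(\cdot,0)\|_{L^p(\Rn)}\le c\,2^{j/p}\|h\|_{L^p(\R)}$, obtained from the band-limited bound $|g(0)|^p\le cR\|g\|_{L^p(\mathbb R)}^p$ (spectrum of $g$ in $[-R,R]$, $R\approx 2^j$) applied to $g=h(x',\cdot)$ and integrated in $x'$. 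Applying this to $h=\De_jF$ and reassembling by the standard reconstruction lemma for ball-spectrum pieces, which is valid precisely because $s:=\al-\frac1p>0$, gives
\[
\|F|_{x_n=0}\|_{\dot B^{\al-1/p}_{pp}(\Rn)}^p \le c\sum_{j}2^{j(\al-1/p)p}\|(\De_jF)(\cdot,0)\|_{L^p(\Rn)}^p \le c\sum_{j}2^{j\al p}\|\De_jF\|_{L^p(\R)}^p = c\|F\|_{\dot B^\al_{pp}(\R)}^p .
\]
For $p\ge 2$ this already finishes the proof, since then $\dot H^\al_p(\R)\hookrightarrow \dot B^\al_{pp}(\R)$.

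The hard part will be the range $1<p<2$, where this last embedding fails ($\dot H^\al_p$ corresponds to the Triebel--Lizorkin scale with second index $2$, and $\dot B^\al_{pp}\subsetneq \dot B^\al_{p2}$), so the crude $\ell^p$ summation over the dyadic blocks is too wasteful. To reach the target from the Bessel-potential norm $\|F\|_{\dot H^\al_p}=\big\|(\sum_j 2^{2j\al}|\De_jF|^2)^{1/2}\big\|_{L^p(\R)}$, the summation must be carried out inside the $L^p$ norm, that is, the trace has to be passed through the square function. This is achieved by a vector-valued Fefferman--Stein maximal inequality that dominates $(\De_jF)(\cdot,0)$ pointwise by a maximal function of $\De_jF$; this square-function estimate is the genuine content of the trace theorem in Theorem 6.6.1 of \cite{BL}. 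One may instead keep the bookkeeping interpolation-theoretic: complex interpolation preserves the domain $\dot H^\al_p$ through \eqref{interpolation0} and preserves the Besov second index $p$ in the target, reducing the claim to endpoint traces $\dot H^{\al_i}_p\to \dot B^{\al_i-1/p}_{pp}$ at two bracketing orders, where the integer-order endpoints follow from the classical Gagliardo difference-quotient argument for every $1<p<\infty$. Either way, the $p<2$ square-function step is the only place real work is needed; the rest is routine.
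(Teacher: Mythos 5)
The paper offers no proof of this lemma at all: it is quoted as a ``well known trace theorem'' with a pointer to Theorem 6.6.1 of \cite{BL}, so your attempt has to be judged as a self-contained proof of that cited result (the reduction to the whole space via an almost-minimizing extension is fine and is the only half-space-specific step). For $p\geq 2$ your argument is complete and correct: the one-dimensional Nikolskii/Plancherel--Polya sampling bound $\|h(\cdot,0)\|_{L^p(\Rn)}\leq c\,2^{j/p}\|h\|_{L^p(\R)}$ for pieces with spectrum in a ball of radius $\approx 2^j$, the ball-spectrum reconstruction lemma (which indeed requires exactly $\al-\frac1p>0$), and the embedding $\dot H^\al_p\hookrightarrow \dot B^\al_{pp}$ valid for $p\geq2$ fit together as you describe.

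The range $1<p<2$, however, is a genuine gap, and neither of your proposed repairs closes it. The first is circular: after the pointwise Peetre-type bound $|(\De_jF)(x',0)|^p\leq c\,2^j\int_{\{0<x_n<2^{-j}\}}\bigl((\De_jF)^*_j(x',x_n)\bigr)^p\,dx_n$, the natural H\"older step (exponents $\frac2p$ and $\frac2{2-p}$) needed to trade the $\ell^p$-sum over $j$ for the $\ell^2$ square function inside $L^p$ produces the counting factor $\sum_j\chi_{\{x_n<2^{-j}\}}$, which is unbounded as $x_n\to0$ (in the homogeneous calculus it is infinite for every $x_n$), so no vector-valued Fefferman--Stein inequality rescues this summation; the $q$-independence of traces of Triebel--Lizorkin spaces is precisely the nontrivial content here, classically handled by atomic or $\varphi$-transform decompositions (Jawerth, Frazier--Jawerth). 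Declaring that this step ``is the genuine content of Theorem 6.6.1 of \cite{BL}'' amounts to citing the result you set out to prove, i.e., exactly what the paper does. The second repair, complex interpolation between integer-order Gagliardo endpoints, is sound where it applies but cannot reach $\frac1p<\al<1$: an admissible endpoint must be an integer $k>\frac1p$, hence $k\geq1$, and interpolation only produces exponents strictly between two such endpoints, so exponents at or below $1$ are out of reach. (For what it is worth, the paper only ever invokes the lemma with $\al=1$, and with integer orders via differentiation, so your Gagliardo endpoints do cover everything the paper actually uses; they just do not prove the lemma as stated.)
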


We define $Nf$ by
\begin{align}\label{N}
N f(x) = \int_{\Rn} N(x'-y',x_n)f(y')dy'.
\end{align}

Observe that $D_{x_n}Nf$ is the Poisson operator of the  Laplace equation in $\R_+$ and $D_{x_i}Nf=D_{x_n}NR_i'f$  for $i\neq n$, where   $R'=(R_1,\cdots, R_{n-1})$ is the $n-1$ dimensional Riesz operator.
The Poisson operator is bounded from $\dot{B}^{\al-\frac{1}{p}}_{pp}(\Rn)$ to $\dot H^\al_{p}(\R_+), \al \geq 0$ and $R'$ is bounded from $\dot B^s_{pp}(\Rn)$ to $\dot B^s_{pp}(\Rn)$, $s\in {\mathbb R}$ (see \cite{St}).
Hence  the following estimates hold.
\begin{lemm}
\label{poisson1}
Let  $1<p<\infty$. Then
\begin{align}\label{Poisson}
\| \nabla_x Nf\|_{\dot H^\al_{p}}\leq c\|f\|_{\dot B^{\al-1/p}_{pp}(\Rn)}\quad \al \geq 0.
 \end{align}
\end{lemm}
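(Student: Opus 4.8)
The plan is to establish \eqref{Poisson} componentwise, treating the normal derivative $D_{x_n}Nf$ and the tangential derivatives $D_{x_i}Nf$, $1\le i\le n-1$, separately and then summing. Since $\na_x Nf=(D_{x_1}Nf,\dots,D_{x_{n-1}}Nf,D_{x_n}Nf)$, it suffices to bound each term $\|D_{x_i}Nf\|_{\dot H^\al_{p}}$ by $c\|f\|_{\dot B^{\al-1/p}_{pp}(\Rn)}$, which is exactly the content of assembling the two structural facts recorded just before the statement.

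First I would dispose of the normal derivative. As observed above, $D_{x_n}Nf$ is (up to a harmless constant) the Poisson extension of the boundary datum $f$ into $\R_+$. Invoking the stated mapping property of the Poisson operator, namely its boundedness from $\dot B^{\al-1/p}_{pp}(\Rn)$ into $\dot H^\al_{p}(\R_+)$ for every $\al\ge 0$, gives at once
\[
\|D_{x_n}Nf\|_{\dot H^\al_{p}}\le c\,\|f\|_{\dot B^{\al-1/p}_{pp}(\Rn)}.
\]

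Next, for a tangential index $i\neq n$ I would use the identity $D_{x_i}Nf=D_{x_n}N R_i'f$, which recasts the tangential derivative of the Newtonian potential as the Poisson extension of the Riesz-transformed datum $R_i'f$. This identity is most transparent via the partial Fourier transform in the variable $x'$: the transform of $N(\cdot,x_n)$ is a constant multiple of $|\xi'|^{-1}e^{-|\xi'|x_n}$, so differentiation in $x_i$ produces the factor $i\xi_i|\xi'|^{-1}$, which is precisely the symbol of $R_i'$, while differentiation in $x_n$ reproduces the Poisson symbol $e^{-|\xi'|x_n}$. Combining the Poisson bound above with the boundedness of $R_i'$ on $\dot B^{\al-1/p}_{pp}(\Rn)$ then yields
\[
\|D_{x_i}Nf\|_{\dot H^\al_{p}}=\|D_{x_n}N(R_i'f)\|_{\dot H^\al_{p}}\le c\,\|R_i'f\|_{\dot B^{\al-1/p}_{pp}(\Rn)}\le c\,\|f\|_{\dot B^{\al-1/p}_{pp}(\Rn)}.
\]
Summing over $i=1,\dots,n$ closes the argument.

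Because the proof is in essence an assembly of the two quoted facts, no individual step is a genuine obstacle; the real mathematical content is imported from them. The most delicate point to pin down from scratch is the mapping property of the Poisson operator on the homogeneous scale $\dot B^{\al-1/p}_{pp}(\Rn)\to\dot H^\al_{p}(\R_+)$, valid uniformly for all $\al\ge 0$ including the endpoint $\al=0$ where $\dot H^0_p=L^p$; this is the quantitative half-space estimate driving both cases, and I would reduce it to the corresponding results in \cite{St} together with the $L^p$-theory of the boundary Riesz transforms $R'$, this estimate being complementary to the trace bound of Lemma \ref{trace}.
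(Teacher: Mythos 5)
Your proposal is correct and follows essentially the same route as the paper: the paper proves Lemma \ref{poisson1} precisely by observing that $D_{x_n}Nf$ is the Poisson operator, that $D_{x_i}Nf=D_{x_n}NR_i'f$ for $i\neq n$, and then invoking the boundedness of the Poisson operator from $\dot B^{\al-1/p}_{pp}(\Rn)$ to $\dot H^\al_p(\R_+)$ together with the boundedness of $R'$ on $\dot B^{\al-1/p}_{pp}(\Rn)$, citing \cite{St}. Your extra symbol computation justifying the identity $D_{x_i}Nf=D_{x_n}NR_i'f$ via the partial Fourier transform is a harmless (and correct) elaboration of what the paper states as an observation.
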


According to the Calder\'{o}n-Zygmund inequality
\begin{align}\label{cal-zyg}
\|  \int_{{\mathbb R}^n} \nabla_x^2N(\cdot-y)    f(y) dy \|_{ L^p} \leq c \| f\|_{ L^p}\quad \mbox{ for } \quad 1 < p < \infty.
\end{align}

We will show that for $k \geq 0$.
\begin{align}\label{0501-3}
\| D_x^k \nabla_x^2\int_{{\mathbb R}^n_+} N(x-y)    f(y) dy(x) \|_{L^p } \leq c \| D_x^kf\|_{L^p }.
\end{align}
Then, by the property of complex interpolation (see \eqref{interpolation0} and  \eqref{interpolation1-3}), we get  Lemma \ref{lemma0929-22}.

Note that
\begin{align*}
D_{x_i} \int_{{\mathbb R}^n_+} N(x-y)   f(y) dy  & =   \int_{{\mathbb R}^n_+} N(x-y)   D_{y_i} f(y) dy,\ i\neq n,\\
D_{x_n}  \int_{{\mathbb R}^n_+} N(x-y)    f(y) dy & =   \int_{{\mathbb R}^n_+} N(x-y)   D_{y_n} f(y) dy -  \int_{\Rn} N(x'-y', x_n) f(y', 0) dy'.
\end{align*}
By  \eqref{cal-zyg},  \eqref{Poisson} and Lemma \ref{trace},  we have
\begin{align*}
\notag \| \nabla_{x}^3 \int_{{\mathbb R}^n_+} N(x-y)    f(y) dy\|_{L^p }
& \leq c \big( \| \nabla_x f\|_{L^p } + \| f(\cdot, 0)\|_{\dot B^{1 -\frac1p}_{pp} (\Rn)} \big)\\
& \leq c  \| \nabla_x f\|_{L^p }.
\end{align*}
By the successive argument,  \eqref{0501-3} can be obtained for any multiple integer $k\geq 0$. $\Box$

\section{Proof of Lemma \ref{lemm0315}}
\label{appendix0718}
Fix a Schwartz function $\phi\in {\mathcal S} ({\mathbb
R}^{n})$ satisfying $\hat{\phi}(\xi) > 0$ on $\frac12 < |\xi|
  < 2$, $\hat{\phi}(\xi)=0$ elsewhere, and
$\sum_{j=-\infty}^{\infty} \hat{\phi}(2^{-j}\xi) =1$
for $\xi \neq 0$. Let
\begin{align*}
\widehat{\phi_j}(\xi) &:= \widehat{\phi}(2^{-j} \xi ), \qquad (j =
0, \pm 1, \pm 2 , \cdots),
\end{align*}
where $\hat f = {\mathcal F}(f)$ is a Fourier transform of $f$. Let $\Phi = \phi_{-1}  +
\phi_0 +\phi_1$  and $\Phi_j(\xi) = \Phi(2^{-j} \xi)$ such that
$supp \, \Phi_j \subset \{ 2^{-j -2} < |\xi| < 2^{-j +2} \}$ and
$\Phi \equiv 1$ in  $2^{j -1} < |\xi|   < 2^{j +1}$.
\begin{lemm}\label{multiplier2}
Let  $\rho_{tj}(\xi) =   \Phi_j ( \xi) e^{-t|\xi|^2}$ for each integer
$j$.  Then  $\rho_{tj}( \xi)$ are  $L^\infty(\R)$-multipliers with the
finite norm $M(t,j)$. Moreover for $t
> 0$
\begin{align}\label{multiplier2_2}
M(t,j) & \leq c e^{-\frac14 t2^{2 j}}\sum_{0 \leq i \leq n} t^i
2^{2 ij} \leq c  e^{-\frac18 t2^{2 j}}.
\end{align}
\end{lemm}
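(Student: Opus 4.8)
The plan is to identify the multiplier norm $M(t,j)$ with the $L^1$-norm of the associated convolution kernel and then estimate that kernel quantitatively. Writing $K_{tj}:=\mathcal F^{-1}\rho_{tj}$, the operator $f\mapsto \mathcal F^{-1}(\rho_{tj}\hat f)=K_{tj}*f$ has $L^\infty\to L^\infty$ operator norm exactly $\|K_{tj}\|_{L^1(\R)}$ by Young's inequality (and the same quantity bounds it on every $L^p$, $1\le p\le\infty$). Hence it suffices to prove
\begin{align*}
\|K_{tj}\|_{L^1(\R)}\le c\,e^{-\frac14 t2^{2j}}\sum_{0\le i\le n}t^i2^{2ij}.
\end{align*}

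Next I would remove the frequency scale $2^j$ by a dilation. Since $\Phi_j(\xi)=\Phi(2^{-j}\xi)$, we have $\rho_{tj}(\xi)=g_\tau(2^{-j}\xi)$ with $g_\tau(\eta):=\Phi(\eta)e^{-\tau|\eta|^2}$ and $\tau:=t2^{2j}$. The inverse Fourier transform turns the dilation $\xi\mapsto 2^{-j}\xi$ into $x\mapsto 2^jx$ together with a factor $2^{jn}$, and the $L^1$-norm is invariant under this change of variables, so $\|K_{tj}\|_{L^1(\R)}=\|\mathcal F^{-1}g_\tau\|_{L^1(\R)}$. This reduces the whole estimate to a bound for $\|\mathcal F^{-1}g_\tau\|_{L^1(\R)}$ in the single parameter $\tau$, and it already explains why the final answer depends on $t$ and $j$ only through $t^i2^{2ij}=\tau^i$.

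For the reduced estimate I would pass to weighted $L^2$. Because $(1+|x|)^{-n}\in L^2(\R)$ (as $2n>n$), Cauchy–Schwarz gives $\|\mathcal F^{-1}g_\tau\|_{L^1}\le c\|(1+|x|)^n\mathcal F^{-1}g_\tau\|_{L^2}$, and then Plancherel together with the moment–derivative correspondence $x^\beta\mathcal F^{-1}g_\tau=i^{|\beta|}\mathcal F^{-1}(\partial^\beta g_\tau)$ yields
\begin{align*}
\|\mathcal F^{-1}g_\tau\|_{L^1(\R)}\le c\sum_{|\beta|\le n}\|\partial_\eta^\beta g_\tau\|_{L^2(\R)}.
\end{align*}
Each derivative is estimated on the fixed annulus $\mathrm{supp}\,\Phi$: differentiating $g_\tau=\Phi e^{-\tau|\eta|^2}$ produces a finite sum of terms $q(\tau,\eta)e^{-\tau|\eta|^2}$ with $q$ a polynomial in $\tau$ of degree at most $|\beta|$ and with coefficients bounded on the annulus, so $|\partial^\beta g_\tau(\eta)|\le c\big(\sum_{i=0}^{|\beta|}\tau^i\big)e^{-\tau|\eta|^2}$. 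Since $|\eta|$ is bounded below on $\mathrm{supp}\,\Phi$ we have $e^{-\tau|\eta|^2}\le e^{-\frac14\tau}$ there (the constant $\tfrac14$ being the square of the support's lower bound), and integrating over the bounded annulus gives $\|\partial^\beta g_\tau\|_{L^2}\le c\big(\sum_{i=0}^{|\beta|}\tau^i\big)e^{-\frac14\tau}$. Summing over $|\beta|\le n$ and recalling $\tau=t2^{2j}$ yields the first inequality of \eqref{multiplier2_2}; the second follows because $\tau^i e^{-\frac18\tau}$ is bounded for each fixed $i$, whence $e^{-\frac14\tau}\sum_{i=0}^n\tau^i\le c\,e^{-\frac18\tau}$.

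The main obstacle is precisely the passage from boundedness to a quantitative $L^1$ kernel bound with the correct $t,j$-dependence: $L^2$-boundedness of the symbol is immediate, but controlling the $L^\infty$ (equivalently $L^1$) operator norm forces one to trade spatial decay of $K_{tj}$ for derivatives of the symbol, and each derivative striking the Gaussian costs a factor $\tau=t2^{2j}$. The two competing effects — the exponential gain $e^{-\frac14 t2^{2j}}$ coming from the frequency localization $|\xi|\gtrsim 2^j$ on $\mathrm{supp}\,\Phi_j$, and the polynomial loss $\sum_i\tau^i$ coming from the derivatives — must be kept explicit, and the dilation normalization in the second step is what keeps the bookkeeping clean by eliminating the annulus volume factor $2^{jn}$.
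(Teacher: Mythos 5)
Your argument is correct in structure, and it is worth noting that the paper itself gives no proof of this lemma at all: it simply cites Lemma 13 of \cite{CK2}, whose proof is of exactly the type you wrote down. Indeed the shape of the stated bound, $e^{-ct2^{2j}}\sum_{0\le i\le n}t^i2^{2ij}$, is the fingerprint of your bookkeeping (up to $n$ derivatives of the symbol, each derivative hitting the Gaussian costing one factor $\tau=t2^{2j}$), and each of your steps is sound: the multiplier norm is controlled by $\|K_{tj}\|_{L^1}$, the $L^1$ norm is invariant under the dilation that reduces to the unit annulus, $(1+|x|)^{-n}\in L^2(\R)$ justifies the Cauchy--Schwarz step, and Plancherel plus the Leibniz estimate $|\partial^\beta_\eta g_\tau|\le c\bigl(\sum_{i\le|\beta|}\tau^i\bigr)e^{-\tau|\eta|^2}$ on the annulus finishes the bound.

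The one genuine flaw is numerical, in the parenthetical ``the constant $\tfrac14$ being the square of the support's lower bound.'' With the paper's normalization, $\Phi=\phi_{-1}+\phi_0+\phi_1$ and $\hat\phi$ supported in $\{\tfrac12<|\xi|<2\}$, the symbol $\Phi$ lives on $\{\tfrac14<|\eta|<4\}$, so the lower bound of $|\eta|$ is $\tfrac14$ and its \emph{square} is $\tfrac1{16}$; your argument therefore yields $M(t,j)\le c\,e^{-\frac1{16}t2^{2j}}\sum_{0\le i\le n}t^i2^{2ij}\le c\,e^{-\frac1{32}t2^{2j}}$, not the constants $\tfrac14$ and $\tfrac18$ displayed in \eqref{multiplier2_2}. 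Moreover this cannot be repaired so as to recover $\tfrac14$: since $|\rho_{tj}(\xi)|=|\widehat{K_{tj}}(\xi)|\le\|K_{tj}\|_{L^1}=M(t,j)$, choosing $\eta_0$ with $\tfrac14<|\eta_0|<\tfrac12$ and $\Phi(\eta_0)>0$ gives $M(t,j)\ge\Phi(\eta_0)\,e^{-|\eta_0|^2t2^{2j}}$, which for $t2^{2j}$ large exceeds $c\,e^{-\frac14t2^{2j}}\sum_{0\le i\le n}(t2^{2j})^i$ because $|\eta_0|^2<\tfrac14$. So under the paper's definition of $\Phi$ the stated constant $\tfrac14$ is itself unattainable; the defect is in the lemma's constants (or in a different normalization of $\Phi$ in \cite{CK2}), not in your method. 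Since every place the lemma is used (Appendices B and C) needs only $M(t,j)\le c\,e^{-c_0t2^{2j}}$ for some $c_0>0$, your proof delivers everything the paper actually requires, provided you state it with the honest constants.
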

See Lemma 13 in \cite{CK2}.

Let $\tilde u_0(x) = u_0(x) $   be  a zero extension over $\R$ such that $\|\tilde u_0 \|_{\dot B^{\al }_{p \infty} (\R)} \leq c
\|u_0 \|_{\dot B^{\al }_{p \infty} }$. Let
\begin{align*}
v(x,t) = \int_{\R} \Ga (x-y,t) \tilde u_0(y) dy.
\end{align*}
Then, we have  $\| v\|_{L^\infty \dot B^{\al }_{p \infty} } \leq c \| v\|_{L^\infty (0, \infty; \dot B^{\al }_{p \infty} (\R))}$.

Using the dyadic partition of unity $ \sum_{j=-\infty}^{\infty} \hat
\phi(2^{-j} \xi) =1$ for $\xi \neq 0$, we can write
\begin{align*}
{\mathcal F} ( v * \phi_j ) (\xi,t) =
  \hat\phi (2^{-j} \xi) e^{-t|\xi |^2}
\widehat{ \tilde u_0}(\xi).
\end{align*}

For $t>0$ we have
\begin{align}\label{u_1}
\notag \|  v * \phi_j(t)\|_{L^p (\R)}  & = \Big( \int_{{\mathbb R}^n}
\left|{\mathcal F}^{-1} \Big(  e^{-t|\xi|^2}
\hat \phi_j(\xi) \;\widehat{ \tilde u_0}(\xi) \Big)(x)\right|^p dx \Big)^\frac1p\\
& = \Big( \int_{{\mathbb R}^n}
\left|{\mathcal F}^{-1} \Big(  \hat \Phi_j(\xi) e^{-t|\xi|^2}
\hat \phi_j(\xi) \;\widehat{ \tilde u_0}(\xi) \Big)(x)\right|^p dx \Big)^\frac1p.
\end{align}

By Lemma \ref{multiplier2},  we have
\begin{align*}
 t^{\frac12 \al} \| v(t) \|_{L^p (\R)} & \leq  t^{\frac12 \al} \sum_{-\infty< j< \infty} \| v(t) * \phi_j\|_{L^p (\R)}\\
 & \leq   t^{\frac12 \al} \sum_{-\infty< j < \infty}  M(t,j) \| \tilde  u_0 * \phi_j\|_{L^p(\R)}\\
& \leq c t^{\frac12 \al} \sum_{-\infty< j < \infty} 2^{j\al} 2^{-t2^{2j}} 2^{-j\al}  \| \tilde  u_0 *\phi_j\|_{L^{p}(\R)}\\
& \leq c   t^{\frac12 \al} \sum_{-\infty< j < \infty} 2^{j\al} 2^{-t2^{2j}} \| \tilde  u_0 \|_{\dot B^{-\al}_{p \infty} (\R)}\\
& \leq c   \| \tilde  u_0 \|_{\dot B^{-\al}_{p \infty} (\R)}
\end{align*}
and
\begin{align*}
 2^{\al j} \| v(t) * \phi_j\|_{L^p (\R)}
 & \leq   2^{\al j} M(t,j) \| \tilde  u_0 * \phi_j\|_{L^p(\R)}\\
& \leq c 2^{\al j} 2^{-t2^{2j}}  \| \tilde  u_0 *\phi_j\|_{L^{p}(\R)}\\
& \leq c   \| \tilde  u_0 \|_{\dot B^{\al}_{p \infty} (\R)}.
\end{align*}
We complete the proof of Lemma \ref{lemm0315}.

\section{Proof of Lemma \ref{0929-1}}

\label{appendix0292-1}

Because the proofs will be done  in the same way, we  prove only the case of $ \Ga^* * {\mathbb P}f$.

A crucial step in the proof of Lemma \ref{0929-1} is the following lemma, which is probably
known to experts, however, we could not find it in the literature and thus, we provide
its proof.

\begin{lemm}\label{lemma0128}
Let $X_i$ and $Y_i$, $i = 1,2$ be Banach spaces and  $0< t $ be fixed. Let
$T : L^1(0, t; X_i) \ri Y_i, i = 1,2$ be   linear operators such that
\begin{align*}
\| Tf \|_{Y_i} \leq M_i\int_0^t (t-s)^{-\be_i} \| f(s) \|_{X_i} ds, \quad i = 1,2 \quad \forall f \in L^1(0, t; X_i).
\end{align*}
Then,  for $0 <\te< 1$ and $1 \leq q \leq \infty$,
\begin{align*}
\| Tf \|_{(Y_1, Y_2)_{\te,q}} \leq M_1^\te M_2^{1 -\te}\int_0^t (t-s)^{-\be} \| f(s) \|_{(X_1, X_2)_{\te,q}} ds,
\end{align*}
where $\be = \be_1 \te + \be_2 (1 - \te)$.
\end{lemm}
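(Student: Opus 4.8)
The plan is to prove the inequality through the $K$-functional description of real interpolation, reducing it to a pointwise-in-time optimization that is then integrated via Minkowski's integral inequality. Recall that for a compatible couple $(Z_1,Z_2)$ one writes $K(u,a)=\inf_{a=a_1+a_2}(\|a_1\|_{Z_1}+u\|a_2\|_{Z_2})$, and that in the convention used in this paper (the one making \eqref{interpolation1} read $\al=\te\al_1+(1-\te)\al_2$, so that $\te=1$ returns the \emph{first} space) the norm is
\[
\|a\|_{(Z_1,Z_2)_{\te,q}}=\Big(\int_0^\infty\big(u^{-(1-\te)}K(u,a)\big)^q\,\tfrac{du}{u}\Big)^{1/q},
\]
with the essential supremum when $q=\infty$. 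The essential difficulty, which blocks a direct appeal to the abstract interpolation theorem for bounded operators (Theorem 3.1.2 in \cite{BL}), is that the two kernel bounds carry \emph{different} time exponents $\be_1\neq\be_2$; consequently the splitting parameter must be allowed to depend on $s$, and the two exponents must be recombined by a change of variables at the very end.

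First I would fix $u>0$ and, for each $s\in(0,t)$, choose a near-optimal splitting $f(s)=f_1(s)+f_2(s)$ in $X_1+X_2$. Since $Tf=Tf_1+Tf_2$ is an admissible decomposition in $Y_1+Y_2$, the subadditivity of the $K$-functional together with the two hypotheses gives
\[
K(u,Tf)\le\|Tf_1\|_{Y_1}+u\|Tf_2\|_{Y_2}\le\int_0^t M_1(t-s)^{-\be_1}\Big(\|f_1(s)\|_{X_1}+\la(u,s)\|f_2(s)\|_{X_2}\Big)\,ds,
\]
where I set $\la(u,s)=\tfrac{uM_2}{M_1}(t-s)^{\be_1-\be_2}$. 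Taking the splitting to realize the infimum defining $K(\la(u,s),f(s);X_1,X_2)$ pointwise in $s$ (up to a harmless factor $1+\ep$) then yields
\[
K(u,Tf)\le M_1\int_0^t(t-s)^{-\be_1}\,K\big(\la(u,s),f(s)\big)\,ds.
\]

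Next I would insert this bound into the $L^q(\tfrac{du}{u})$-norm defining $\|Tf\|_{(Y_1,Y_2)_{\te,q}}$ and apply Minkowski's integral inequality to pull the $L^q$-norm inside the $s$-integral. For each fixed $s$ the substitution $v=\la(u,s)$ leaves $\tfrac{du}{u}=\tfrac{dv}{v}$ invariant and converts the weight into $u^{-(1-\te)}=(M_2/M_1)^{1-\te}(t-s)^{(1-\te)(\be_1-\be_2)}v^{-(1-\te)}$, so the inner integral reproduces exactly $\|f(s)\|_{(X_1,X_2)_{\te,q}}$. Collecting the prefactors gives the constant $M_1\cdot(M_2/M_1)^{1-\te}=M_1^{\te}M_2^{1-\te}$, while the time weight becomes $(t-s)^{-\be_1+(1-\te)(\be_1-\be_2)}=(t-s)^{-\be}$ with $\be=\te\be_1+(1-\te)\be_2$; these are precisely the constant and exponent claimed.

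The step I expect to require the most care is the measurability in $s$ of the near-optimal decomposition $s\mapsto(f_1(s),f_2(s))$ feeding the pointwise optimization. I would handle it either by a measurable selection realizing $K(\la(u,s),f(s))$ up to the factor $1+\ep$ and then letting $\ep\downarrow 0$, or by first proving the estimate for simple $X_1\cap X_2$-valued functions $f$ and extending by density. The only remaining points, both routine, are the endpoint $q=\infty$, where Minkowski's inequality is replaced by the elementary $\sup_u\int_s\le\int_s\sup_u$, and the observation that $u\mapsto\la(u,s)$ is a bijection of $(0,\infty)$ onto itself for each fixed $s\in(0,t)$, so that the change of variables is legitimate.
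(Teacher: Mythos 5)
Your proposal is correct, and the comparison here is somewhat one-sided: the paper does not prove Lemma \ref{lemma0128} at all, but simply cites Lemma C.1 of \cite{CJ4}, so your argument supplies a self-contained proof where the text offers only a reference. The route you take --- bounding $K(u,Tf)$ by an $s$-dependent near-optimal splitting of $f(s)$ at the parameter $\la(u,s)=\tfrac{uM_2}{M_1}(t-s)^{\be_1-\be_2}$, then Minkowski's integral inequality, then the change of variables $v=\la(u,s)$ which leaves $\tfrac{du}{u}$ invariant --- is the standard (and essentially the only natural) proof of this kind of ``interpolation with drifting kernel exponent'' statement, and your bookkeeping is right: you correctly adopt the paper's convention in \eqref{interpolation1} (so the weight is $u^{-(1-\te)}$ and $\te=1$ returns the first space), the prefactor $M_1(M_2/M_1)^{1-\te}=M_1^\te M_2^{1-\te}$ comes out as claimed, and the exponent $-\be_1+(1-\te)(\be_1-\be_2)=-\te\be_1-(1-\te)\be_2=-\be$ is exact. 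Two technical points deserve the care you flag. First, the lemma's hypothesis tacitly assumes the two operators are restrictions of one linear map on $L^1(0,t;X_1)+L^1(0,t;X_2)$, which your step $Tf=Tf_1+Tf_2$ uses; this is the intended reading. Second, the measurability and integrability of the near-optimal decomposition $s\mapsto(f_1(s),f_2(s))$ (needed so that $f_i\in L^1(0,t;X_i)$ and the hypothesis applies) is a genuine gap in a fully rigorous write-up, and either of your remedies --- proving the bound for simple $X_1\cap X_2$-valued $f$ and passing to the limit, or an $\ep$-measurable selection --- closes it; note that integrability of $\|f_1(s)\|_{X_1}$ also uses $K(\la(u,s),f(s))\le c\,\la(u,s)^{1-\te}\|f(s)\|_{(X_1,X_2)_{\te,q}}$ together with $\be_1\ge 0$, which holds in all the paper's applications. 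With those caveats handled as you indicate, the proof is complete.
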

\begin{proof}
See Lemma C.1 in \cite{CJ4}.

\end{proof}

\begin{lemm}\label{apendixlemma0718-2}
Let
\begin{align*}
w(x,t) = \int_0^t \int_{\Rn}
D_{x} \Ga(x'-y', x_n, t-\tau)  f (y',\tau) dy'd\tau.
\end{align*}
Then, for $\be > 0$,
\begin{align}\label{0413-3}
 \|w(t)\|_{L^p} \leq c \int_0^t (t -\tau)^{\frac1{2p} -1-\frac{\be}2} \| f(\tau)\|_{\dot B^{-\be}_p(\Rn)} d\tau.
\end{align}
\end{lemm}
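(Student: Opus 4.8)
The plan is to reduce \eqref{0413-3} to a kernel estimate at a single time slice and then integrate in $\tau$. Writing $w(\cdot,t)=\int_0^t T_{t-\tau}f(\cdot,\tau)\,d\tau$, where for $s>0$ I set
\[
T_s g(x',x_n):=\int_{\Rn}D_x\Ga(x'-y',x_n,s)\,g(y')\,dy',
\]
Minkowski's integral inequality over $\R_+$ (integrating in both $x'\in\Rn$ and $x_n\in(0,\infty)$) gives $\|w(\cdot,t)\|_{L^p}\le\int_0^t\|T_{t-\tau}f(\cdot,\tau)\|_{L^p}\,d\tau$. Hence it suffices to prove, for every $s>0$, the single-slice bound
\[
\|T_s g\|_{L^p(\R_+)}\le c\,s^{\frac1{2p}-1-\frac\be2}\,\|g\|_{\dot B^{-\be}_p(\Rn)},
\]
since substituting it with $s=t-\tau$ and $g=f(\tau)$ reproduces \eqref{0413-3} exactly.

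To prove the slice estimate I would exploit that the Gaussian factorizes, $\Ga(x',x_n,s)=\Ga_{n-1}(x',s)\,\ga(x_n,s)$, where $\Ga_{n-1}(\cdot,s)$ is (a fixed constant times) the $(n-1)$-dimensional heat kernel and $\ga(x_n,s)=(2\pi s)^{-1/2}e^{-x_n^2/4s}$. I split $D_x$ into its tangential part $D_{x'}$ and its normal part $D_{x_n}$. Because the $x'$-integration is a convolution, the tangential factor of $T_s$ is exactly the heat semigroup $e^{s\De_{x'}}$ on $\Rn$ (up to a harmless constant coming from the normalization of $\Ga$), while the $x_n$-dependence is carried by the frozen factor $\ga$ or $\partial_{x_n}\ga$; by Fubini the $L^p(\R_+)$ norm then factors as a product of an $L^p(\Rn)$ norm in $x'$ and an $L^p(0,\infty)$ norm in $x_n$.

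For a tangential derivative $(j<n)$ one gets $T_sg(x',x_n)=\big(\partial_{x_j}e^{s\De_{x'}}g\big)(x')\,\ga(x_n,s)$, hence
\[
\|T_sg\|_{L^p(\R_+)}=\|\partial_{x_j}e^{s\De_{x'}}g\|_{L^p(\Rn)}\,\|\ga(\cdot,s)\|_{L^p(0,\infty)}.
\]
The crucial input is the heat-semigroup characterization of the negative-order Besov norm, $\|e^{s\De_{x'}}g\|_{L^p(\Rn)}\le c\,s^{-\be/2}\|g\|_{\dot B^{-\be}_p(\Rn)}$ for $\be>0$, with one extra derivative costing a further factor $s^{-1/2}$; this is exactly the estimate established in Appendix \ref{appendix0718} (Lemma \ref{lemm0315} together with the multiplier bound \eqref{multiplier2_2}), now used in dimension $n-1$. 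Thus $\|\partial_{x_j}e^{s\De_{x'}}g\|_{L^p(\Rn)}\le c\,s^{-\frac12-\frac\be2}\|g\|_{\dot B^{-\be}_p(\Rn)}$, while a one-variable scaling computation gives $\|\ga(\cdot,s)\|_{L^p(0,\infty)}\le c\,s^{-\frac12+\frac1{2p}}$, and the product yields the power $s^{\frac1{2p}-1-\frac\be2}$.

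For the normal derivative I would instead write $T_sg(x',x_n)=\big(e^{s\De_{x'}}g\big)(x')\,\partial_{x_n}\ga(x_n,s)$ and combine $\|e^{s\De_{x'}}g\|_{L^p(\Rn)}\le c\,s^{-\be/2}\|g\|_{\dot B^{-\be}_p(\Rn)}$ with the scaling bound $\|\partial_{x_n}\ga(\cdot,s)\|_{L^p(0,\infty)}\le c\,s^{-1+\frac1{2p}}$, obtaining the identical power $s^{\frac1{2p}-1-\frac\be2}$. Summing over the $n$ components of $D_x$ proves the slice estimate, and Minkowski then completes the proof. The only genuinely nontrivial ingredient is the heat characterization of $\dot B^{-\be}_p(\Rn)$, which is already available from Appendix \ref{appendix0718}; everything else is bookkeeping of explicit powers of $s$, so the main point requiring care is merely verifying that the tangential and normal contributions produce the same exponent $\frac1{2p}-1-\frac\be2$.
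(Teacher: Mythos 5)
Your proposal is correct and follows essentially the same route as the paper: Minkowski in time, factorization of the Gaussian into tangential and normal parts, an explicit scaling computation of the $L^p(0,\infty)$ norm of the $x_n$-factor (giving $s^{-\frac12+\frac1{2p}}$ or $s^{-1+\frac1{2p}}$), and the heat-semigroup bound $\|e^{s\Delta_{x'}}g\|_{L^p(\Rn)}\leq c\,s^{-\be/2}\|g\|_{\dot B^{-\be}_p(\Rn)}$ with an extra $s^{-1/2}$ per tangential derivative, which the paper proves in place via the Littlewood--Paley/H\"older argument with the multiplier bound \eqref{multiplier2_2} rather than quoting it as a known characterization. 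The two treatments differ only in whether that semigroup estimate is derived inline or cited, so there is nothing substantive to add.
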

\begin{proof}
Let $D_x = D_{x_n}$. Then,
\begin{align}\label{0413-1}
\notag \| w(t) \|_{L^p } &\leq \int_0^t \big(\int_0^\infty \frac{x_n^p}{(t-\tau)^{\frac{3p}2}} e^{-\frac{x_n^2}{t-\tau}} dx_n \big)^\frac1p
        \| \Ga'_{t-\tau} *'f(\tau)\|_{L^p(\Rn)} d\tau\\
    & =c  \int_0^t (t -\tau)^{\frac1{2p} -1}
        \| \Ga'_{t-\tau} *' f(\tau)\|_{L^p(\Rn)} d\tau.
\end{align}
Here, $\Ga'_t $ is a Gaussian kernel in $\Rn$ and $*'$ is convolution in $\Rn$. Then, we have
\begin{align}\label{0413-2}
 \notag \| \Ga'_{t-s} *'  f(\tau)\|_{L^p(\Rn)} & = \|\sum_{-\infty < k < \infty} \Phi'_k *' \Ga'_{t-s} *' \phi'_k *' f(\tau)  \|_{L^p(\Rn)}\\
\notag & \leq \sum_{-\infty < k < \infty} \|\Phi'_k *' \Ga'_{t-s}\|_{M_p(\Rn)} \| \phi'_k *' f(\tau)  \|_{L^p(\Rn)}\\
\notag & \leq c \sum_{-\infty < k < \infty} e^{-(t-s)2^{2k} } \| \phi'_k *' f(\tau)  \|_{L^p(\Rn)}\\
\notag  & \leq c\big( \sum_{-\infty < k < \infty} e^{-(t-s)\frac{p}{p-1}2^{2k} }2^{\be \frac{p}{p-1} k} \big)^{\frac{p -1}p}  \big( \sum_{-\infty < k < \infty} 2^{-\be pk}  \| \phi'_k *' f(\tau)  \|^p_{L^p(\Rn)} \big)^\frac1p\\
  & \leq c (t-s)^{-\frac{\be}2} \| f(\tau)\|_{\dot B^{-\be}_p(\Rn)}.
\end{align}
From \eqref{0413-1} and \eqref{0413-2}, we obtain \eqref{0413-3}.

Let $D_x = D_{x'}$. Then,
\begin{align}\label{0413-1-1}
\notag \| w(t) \|_{L^p } &\leq \int_0^t \big(\int_0^\infty \frac{1}{(t-\tau)^{\frac{p}2}} e^{-\frac{x_n^2}{t-\tau}} dx_n \big)^\frac1p
        \|D_{x'} \Ga'_{t-\tau} *'f(\tau)\|_{L^p(\Rn)} d\tau\\
    & =c  \int_0^t (t -\tau)^{\frac1{2p} -\frac12}
        \| D_{x'}\Ga'_{t-\tau} *' f(\tau)\|_{L^p(\Rn)} d\tau.
\end{align}
Here, $\Ga'_t $ is a Gaussian kernel in $\Rn$ and $*'$ is convolution in $\Rn$. Then, we have
\begin{align}\label{0413-2-2}
 \notag \|D_{x_n} \Ga'_{t-s} *'  f(\tau)\|_{L^p(\Rn)} & = \|\sum_{-\infty < k < \infty} D_{x'}\Phi'_k *' \Ga'_{t-s} *' \phi'_k *' f(\tau)  \|_{L^p(\Rn)}\\
\notag & \leq \sum_{-\infty < k < \infty} \|D_{x'} \Phi'_k *' \Ga'_{t-s}\|_{M_p(\Rn)} \| \phi'_k *' f(\tau)  \|_{L^p(\Rn)}\\
\notag & \leq c \sum_{-\infty < k < \infty}2^k e^{-(t-s)2^{2k} } \| \phi'_k *' f(\tau)  \|_{L^p(\Rn)}\\
\notag  & \leq c\big( \sum_{-\infty < k < \infty} 2^{\frac{p}{p-1} k} e^{-(t-s)\frac{p}{p-1}2^{2k} }2^{\be \frac{p}{p-1} k} \big)^{\frac{p -1}p}  \big( \sum_{-\infty < k < \infty} 2^{-\be pk}  \| \phi'_k *' f(\tau)  \|^p_{L^p(\Rn)} \big)^\frac1p\\
  & \leq c (t-s)^{-\frac12 -\frac{\be}2} \| f(\tau)\|_{\dot B^{-\be}_p(\Rn)}.
\end{align}
From \eqref{0413-1-1} and \eqref{0413-2-2}, we obtain \eqref{0413-3}.

\end{proof}

{\bf Proof of Lemma \ref{0929-1}}
Recalling Helmholtz decomposition of $f = {\rm div F}$ with $F|_{x_n =0} =0$ in Section \ref{projection}, we have
\begin{align}\label{0421-1-1}
\notag \Ga^* * ({\mathbb P}f )_j (x,t) &= \int_0^t \int_{{\mathbb R}^n_+}
  \Ga(x-y^*, t-\tau) \cdot  {\rm div} \, F'_{j}(y,\tau) dyd\tau\\
 &= -\int_0^t \int_{{\mathbb R}^n_+}
 \nabla_{y} \Ga(x-y^*, t-\tau) \cdot  F'_{j}(y,\tau) dyd\tau.
\end{align}

Using \eqref{0421-1-1},  Young's inequality and \eqref{0718-1},  for  $p_1 \leq p$, we have
\begin{align}\label{0508-1}
\notag \|\Ga^* * ({\mathbb P}\, f)(t)\|_{L^{p} }
 &\leq  c \int_0^t (t-s)^{-\frac12 -\frac{n}2 (\frac1{p_1} -\frac1{p})} \sum_{j=1}^{j =n} \|F'_j(s)\|_{L^{p_1}}  ds \\
& \leq  c \int_0^t (t-s)^{-\frac12 -\frac{n}2 (\frac1{p_1} -\frac1{p})} \| {\mathcal F}(s)\|_{L^{p_1}} ds.
\end{align}
Similarly, we get
\begin{align}\label{0508-1-1}
\notag \|\na \Ga^* * ({\mathbb P}\, f)(t)\|_{L^{p} }
&\leq  c \int_0^t (t-s)^{-\frac12 -\frac{n}2 (\frac1{p_1} -\frac1{p})} \sum_{j=1}^{n}\| \na F'_j(s)\|_{L^{p_1}} ds \\
&\leq  c \int_0^t (t-s)^{-\frac12 -\frac{n}2 (\frac1{p_1} -\frac1{p})} \| {\mathcal F}(s)\|_{\dot H^1_{p_1}} ds.
\end{align}

From  \eqref{0421-1-1},  we have
\begin{align}\label{0421-1}
\notag \na^2_x\Ga^* * ({\mathbb P}f )_j (x,t) &= -\int_0^t \int_{{\mathbb R}^n_+}
 \nabla_{y} \Ga(x-y^*, t-\tau) \cdot  \na^2_y F'_{j}(y,\tau) dyd\tau \\
& \quad + \int_0^t \int_{\Rn} D_{x} \Ga (x' -y', x_n, t-s) D_{y_n} F_j(y', 0, s) dy'd\tau.
\end{align}
Hence from Lemma \ref{apendixlemma0718-2}, for $\be > 0$, we have
\begin{align*}
\| D_x^2 \Ga^* * ({\mathbb P}\, f) (t) \|_{L^p } &\leq \int_0^t   (t-\tau)^{-\frac12 -\frac{n}{2p_1} +\frac{n}{2p}} \| D^2_y F'_{j}(\tau)\|_{L^{p_1}(\R_+)}  d\tau \\
&\quad + \int_0^t (t -\tau)^{\frac1{2p} -1-\frac{\be}2} \| D_y F'_{j}(\tau)\|_{\dot B^{-\be}_p(\Rn)}  d\tau\\
&: = I_1(t) + I_2(t).
\end{align*}
From \eqref{0718-1},  we have
\begin{align*}
I_1(t) \leq c\int_0^t (t-\tau)^{-\frac12 -\frac{n}{2p_1} +\frac{n}{2p}}  \| F(\tau)\|_{\dot H^2_{p_1}} d\tau.
\end{align*}
Because  $1 +\frac{n-1}p < \frac{n}{p_1} < 1 +\frac{n}p$, taking $\be  = -1   +\frac{n}{p_1} - \frac{n-1}p > 0$, we have
\begin{align*}
 I_2(t)  & \leq c \int_0^t (t -\tau)^{\frac1{2p} -1-\frac{\be}2} \| D_y F'_{j}(\tau)\|_{\dot B^{1 -\frac{1}{p_1} }_{p_1}(\Rn)} d\tau\\
& \leq c \int_0^t (t -\tau)^{\frac1{2p} -1-\frac{\be}2} \| D_y F'_{j}(\tau)\|_{\dot H^{1  }_{p_1}(\R_+)} d\tau\\
&\leq  c  \int_0^t (t -\tau)^{-\frac12 -\frac{n}2(\frac1{p_1} -\frac1p)}
        \|    F(\tau)\|_{\dot H^2_{p_1}(\R_+)} d\tau.
\end{align*}
Hence, we obtain
\begin{align}\label{0718-3}
\| D_x^2 \Ga^* * ({\mathbb P}\, f) (t) \|_{L^p } \leq  c\int_0^t (t-\tau)^{-\frac12 -\frac{n}{2p_1} +\frac{n}{2p}}  \| F(\tau)\|_{\dot H^2_{p_1}} d\tau.
\end{align}

Then, from \eqref{0508-1},   we get
\begin{align*}
 \| \Ga^* * ({\mathbb P}\, f)(t)\|_{L^p  }
 & \leq  c \int_0^t (t-s)^{-\frac12 -\frac{n}2 (\frac1{p_1} -\frac1{p})} \| {\mathcal F}(s)\|_{L^{p_1}} ds\\
  &  \leq  c  \sup_{0 < s <t} \big( s^{\frac12 \al +\frac12 -\frac{n}2 (\frac1{p_1} -\frac1{p}) } \| {\mathcal F}(s)\|_{L^{p_1}} \big) \int_0^t (t-s)^{-\frac12 -\frac{n}2 (\frac1{p_1} -\frac1{p})}
      s^{- \frac12 \al -\frac12 +\frac{n}2 (\frac1{p_1} -\frac1{p}) } ds\\
   &  =  c  t^{-\frac12  \al  } \sup_{0 < s <t} \big(  s^{\frac12 \al +\frac12 -\frac{n}2 (\frac1{p_1} -\frac1{p}) }   \| {\mathcal F}(s)\|_{L^{p_1 }} \big).
\end{align*}
Hence, we complete the proof of  (1) of  Lemma \ref{0929-1}.

From  \eqref{0508-1}, \eqref{0508-1-1},    Lemma \ref{lemma0128} and \eqref{interpolation1}, for $0 < \al < 1$ and $\frac{n}{p_1} < 1 + \frac{n}p$,  we get
\begin{align}\label{0723-2}
\notag \| \Ga^* * ({\mathbb P}\, f)(t)\|_{\dot B^{\al}_{p\infty}  }
 & \leq  c \int_0^t (t-s)^{-\frac12 -\frac{n}2 (\frac1{p_1} -\frac1{p})} \| {\mathcal F}(s)\|_{\dot B^{\al}_{p_1\infty}} ds\\
\notag  &  \leq  c  \sup_{0 < s <t} \big( s^{\frac12 -\frac{n}{2p_1} +\frac{n}{2p}} \| {\mathcal F}(s)\|_{\dot B^{\al}_{p_1\infty}} \big) \int_0^t (t-s)^{-\frac12 -\frac{n}2 (\frac1{p_1} -\frac1{p})}  s ^{-\frac12 +\frac{n}{2p_1} -\frac{n}{2p}} ds\\
   &  =  c  \sup_{0 < s <t} \big( s^{\frac12 -\frac{n}{2p_1} +\frac{n}{2p}}  \| {\mathcal F}(s)\|_{\dot B^{\al}_{p_1\infty}} \big).
\end{align}
Hence, we obtain (2) of  Lemma \ref{0929-1} for $0 < \al <1$.

From \eqref{0508-1-1}, \eqref{0718-3}, Lemma \ref{lemma0128}  and \eqref{interpolation1}, for $1 < \al < 2$ and  $1< p_1< p$ with $1 +\frac{n-1}p < \frac{n}{p_1} < 1 +\frac{n}p$,  we have
\begin{align}\label{0719-1}
\| \Ga^* * ({\mathbb P}\, f) (t)\|_{\dot B^\al_{p \infty} } & \leq \int_0^t (t -s)^{-\frac12 -\frac{n}2 (\frac1{p_1} - \frac1p)} \| F(s)\|_{\dot B^{\al}_{p_1 \infty}} ds
\end{align}
As the same estimate of \eqref{0723-2},  we complete the proof of (2) of  Lemma \ref{0929-1}.

\end{document}